\definecolor{webgreen}{rgb}{0,.5,0}
\definecolor{webbrown}{rgb}{.6,0,0}
\def\modd#1 #2{#1\ \mbox{\rm (mod}\ #2\mbox{\rm )}}
\def\red#1{\textcolor{red}{#1}}
\def\divides{{ \, | \,}}
\def\tothe{\uparrow}
\def\Zee{{\mathbb{Z}}}
\def\Que{{\mathbb{Q}}}
\DeclareMathOperator{\rad}{rad}
\newcommand{\seqnum}[1]{\href{http://oeis.org/#1}{\underline{#1}}}
\theoremstyle{plain}
\newtheorem{theorem}{Theorem}
\newtheorem{lemma}[theorem]{Lemma}
\newtheorem{proposition}[theorem]{Proposition}
\theoremstyle{definition}
\newtheorem{example}[theorem]{Example}
\newtheorem{conjecture}[theorem]{Conjecture}
\theoremstyle{remark}
\newtheorem{remark}[theorem]{Remark}
\begin{document}

\title{The Generalized Nagell-Ljunggren Problem: \\  
Powers with Repetitive Representations}

\author{Andrew Bridy \\
Department of Mathematics \\
Texas A{\&}M University \\
Mailstop 3368 \\
College Station, TX 77843-3368  \\
USA \\
\href{mailto:andrewbridy@math.tamu.edu}{\tt andrewbridy@math.tamu.edu} \\
\ \\
\and
Robert J. Lemke Oliver\\
Department of Mathematics\\
Tufts University\\
Medford, MA  02155\\ 
USA\\
\href{mailto:robert.lemke_oliver@tufts.edu}{\tt robert.lemke\_oliver@tufts.edu} \\ 
\ \\
\and
Arlo Shallit \\
Toronto, Ontario \\
\and 
Jeffrey Shallit \\
School of Computer Science \\
University of Waterloo \\
Waterloo, ON  N2L 3G1 \\
Canada \\
\href{mailto:shallit@cs.uwaterloo.ca}{\tt shallit@cs.uwaterloo.ca}
}

\maketitle

\begin{abstract}
We consider a natural generalization of the Nagell-Ljunggren equation
to the case where the $q$th power of an integer $y$, for $q \geq 2$,
has a base-$b$ representation that
consists of a length-$\ell$
block of digits repeated $n$ times, where $n \geq 2$.  
Assuming the $abc$ conjecture of Masser and Oesterl\'e, we 
completely characterize
those triples $(q,n,\ell)$ for which there are infinitely many solutions
$b$.  In all cases predicted by the $abc$ conjecture,
we are able (without any assumptions) to
prove there are indeed infinitely many solutions.
\end{abstract}

\section{Introduction}

Number theorists are often concerned with integer powers, with Fermat's 
``last theorem'' and Waring's problem being the two most prominent 
examples.
Another classic problem from number theory is the
{\it Nagell-Ljunggren problem}:  for which integers
$n, q \geq 2$ does the Diophantine equation
\begin{equation}
y^q = {{b^n - 1} \over {b-1}}
\label{nle}
\end{equation}
have positive integer solutions $(y,b)$?
See, for example, 
\cite{Nagell:1920,Nagell:1921,Ljunggren:1943a,Ljunggren:1943b,Oblath:1956,Shorey:1986,Le:1994,Hirata-Kohno&Shorey:1997,Bugeaud&Mignotte:1999a,Bugeaud&Mignotte:1999b,Bugeaud&Mignotte&Roy&Shorey:1999,Bugeaud&Mignotte&Roy:2000,Shorey:2000,Bennett:2001,Bugeaud&Hanrot&Mignotte:2002,Bugeaud:2002,Bugeaud&Mignotte:2002,Mihailescu:2007,Bugeaud&Mihailescu:2007,Mihailescu:2008,Browkin:2008,Kihel:2009,Laishram&Shorey:2012,Li&Li:2014,Bennett&Levin:2015}.

On the other hand, in combinatorics on words, repetitions of
strings play a large
role (e.g., \cite{Thue:1906,Thue:1912,Berstel:1995}).  If $w$ is a word (i.e., a
string or block of symbols chosen from a finite alphabet $\Sigma$), then
by $w \tothe n$ we mean the concatenation
$\overbrace{ww\cdots w}^n$.  (This is ordinarily written $w^n$, but we have
chosen a different notation to avoid any possible confusion with the power of
an integer.)  For example,
${\tt (mur)}\tothe 2 = {\tt murmur}$.

In this paper we combine both these definitions of powers and examine
the consequences.

In terms of the base-$b$ representation of both sides,
the Nagell-Ljunggren equation \eqref{nle} can be viewed as asking
when a power of an integer has base-$b$ representation
of the form $1\tothe n$ for some integer $n \geq 2$; such a number 
is sometimes called a ``repunit'' \cite{Yates:1978}.
An obvious generalization is to consider those powers of integers
with base-$b$ representation $a \tothe n$ for a single digit $a$;
such a number is somtimes called a ``repdigit'' \cite{Broughan:2012}.
This suggests an obvious further generalization of \eqref{nle}:
when does the power of an integer have a base-$b$
representation of the form $w\tothe n$ for some $n \geq 2$ and some
arbitrary word $w$ (of some given nonzero length $\ell$)?
In this paper we investigate this problem.

\begin{remark}
A related topic, which we do not examine here, is integer powers that
have base-$b$ representations that are palindromes.  See, for
example, \cite{Korec:1991,Hernandez&Luca:2006,Cilleruelo&Luca&Shparlinski:2009}.
\end{remark}

We introduce some notation.  Let $\Sigma_b = \{ 0,1,\ldots, b-1 \}$.
Let $b \geq 2$ be an
integer.   For an integer $n \geq 0$, we let
$(n)_b$ represent the canonical representation of $n$ in base $b$ (that is,
the one having no leading zeroes).  
For a word $w = a_1 a_2 \cdots a_n \in \Sigma_b^n$ we define
$[w]_b$ to be $\sum_{1 \leq i \leq n} a_i b^{n-i}$, the value of the
word $w$ interpreted as an integer in base $b$, and
we define $|w|$ to be the length of the word $w$
(number of alphabet symbols in it).    

Using this notation, we can express the class of equations we are
interested in:  they are of the form
\begin{equation}
 (y^q)_b = w\tothe n ,
\label{first}
\end{equation}
where $y, q, b, n \geq 2$ and $w \in \Sigma_b^*$.
Here we are thinking of $q$ and $n$ as given, and our goal is to
determine for which $b$ there exist
solutions $y$ and $w$.   Furthermore, we may classify
solutions $w$ according to their length $\ell = |w|$.

Alternatively, we can ask about the solutions to the equation
\begin{equation}
y^q = c {{b^{n\ell} - 1} \over {b^\ell - 1}} ,
\label{equiva}
\end{equation}
with $b^{\ell-1} \leq c < b^\ell$.  The correspondence of this equation with
Eq.~\eqref{first} is that $w = (c)_b$.
The inequality $b^{\ell-1} \leq c < b^\ell$ guarantees that 
the base-$b$ representation of $y^q$ is indeed an
$\ell$-digit string that does not start with the digit $0$.

Our results can be summarized as follows.  We call a triple of 
integers $(q,n, \ell)$ for $q, n \geq 2$ and $\ell \geq 1$
{\it admissible} if either
\begin{itemize} 
\item $(q,n) = (2,2)$,
\item $(n,\ell) = (2,1)$, or
\item $(q,n,\ell) \in \{ (2,3,1),(2,3,2),(3,2,2),(3,2,3),(3,3,1),(2,4,1),(4,2,2) \}$.
\end{itemize}
Otherwise $(q,n,\ell)$ is {\it inadmissible}.

Here is our main result:

\begin{theorem}
\leavevmode
\begin{itemize}
\item[(a)]
Assuming the $abc$ conjecture, there are only finitely many solutions
$(q,n,\ell,b,y,c)$ to \eqref{equiva} such that the triple $(q,n,\ell)$
is inadmissible.

\item[(b)]
For each admissible triple $(q,n,\ell)$, there are infinitely many solutions
$(b,y)$ to the equation $(y^q)_b = w \tothe n$ for $|w| = \ell$.
\end{itemize}
\label{main-thm}
\end{theorem}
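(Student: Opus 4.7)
The proof splits into the conditional finiteness statement (a) and the unconditional existence statement (b), which require very different techniques.

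For part (a), I would apply the abc conjecture of Masser--Oesterl\'e to the equation rewritten as
\[
cb^{n\ell} = y^q(b^\ell - 1) + c,
\]
an equality of the form $X + Y = Z$ with $X = y^q(b^\ell-1)$, $Y = c$, $Z = cb^{n\ell}$. Setting $g = \gcd(X, Y, Z)$ and passing to the coprime triple, abc yields
\[
\frac{cb^{n\ell}}{g} \ll_\epsilon \mathrm{rad}\!\left(y(b^\ell-1)cb\right)^{1+\epsilon}.
\]
The crucial saving is that $\mathrm{rad}(b^{n\ell}) = \mathrm{rad}(b) \leq b$. Combining this with the size estimate $y \leq (2c)^{1/q} b^{(n-1)\ell/q}$ from the equation, with $c < b^\ell$, and with $g \leq c$, and then taking logarithms in base $b$, produces a linear inequality in the exponents. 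Sharpening the analysis---by exploiting the cyclotomic factorization $y^q = c \prod_{d\,|\,n,\,d > 1}\Phi_d(b^\ell)$, splitting on $\gcd(c, b^\ell - 1)$, and carefully tracking the constants near boundary cases---should show that the inequality fails for large $b$ precisely when $(q,n,\ell)$ is inadmissible, thereby forcing finitely many solutions in each inadmissible case.

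For part (b), I would construct an explicit infinite family of solutions for each admissible shape. For $(n,\ell) = (2,1)$ and any $q \geq 2$, the family $c = 1$, $b = y^q - 1$ ($y \geq 2$) immediately gives $(y^q)_b = 11$. For $(q,n) = (2,2)$ and any $\ell \geq 1$, writing $b^\ell + 1 = st^2$ with $s$ squarefree, the assignment $c = sm^2$, $y = stm$ solves $y^2 = c(b^\ell + 1)$; infinitely many suitable $b$ are produced by fixing a prime $p \equiv 1 \pmod{2\ell}$ with $p \nmid \ell$ and using Hensel's lemma to guarantee $p^2 \mid b^\ell + 1$ for $b$ in a specific residue class mod $p^2$. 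For each of the seven sporadic admissible triples, the plan is to construct an infinite family through a Pell-type (or more generally, rational-point) parametrization. For instance, for $(2,4,1)$ the choice $c = (b+1)/2$ reduces $y^2 = c(b+1)(b^2+1)$ to requiring $b^2 + 1 = 2v^2$, a classical Pell equation with infinitely many solutions (e.g.\ $b = 7, 41, 239, \ldots$); similar Pell or rational parametrizations should handle the other six triples.

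The main obstacle I anticipate is the precision of the abc analysis in part (a): the naive bound rules out most inadmissible cases, but distinguishing boundary triples like $(2,4,1)$ (admissible) from $(2,4,2)$ (inadmissible) demands finer control of the radical and the gcd---likely through a cyclotomic refinement or a case split on $\gcd(c, b^\ell - 1)$. In part (b), producing the sporadic families is inherently case-by-case, with each likely requiring a different Pell equation or a rational parametrization of an auxiliary curve; I do not expect a unified construction, so the labor lies in discovering the right ansatz for each of the seven triples.
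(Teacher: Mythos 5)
Your abc setup for part (a) is essentially the paper's argument in disguise: in your triple $X+Y=Z$ with $X=y^q(b^\ell-1)$, $Y=c$, $Z=cb^{n\ell}$, the gcd is exactly $c$ (since $c\mid X$), and dividing it out leaves precisely the identity $(b^{n\ell}-1)+1=b^{n\ell}$ to which the paper applies abc. The danger lies in your radical bound: if you estimate $\rad\bigl(y(b^\ell-1)cb\bigr)\le y\cdot b^\ell\cdot c\cdot b$ you lose a factor of $b^\ell$, and the resulting exponent inequality $n\ell(1-1/q)\le 2\ell+1+o(1)$ fails to exclude, e.g., the inadmissible triple $(2,4,2)$. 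The fix is not a cyclotomic refinement or a split on $\gcd(c,b^\ell-1)$: it is the observation that every prime dividing $(b^{n\ell}-1)/(b^\ell-1)$ (in particular every prime dividing $c$) divides $y$, whence $\rad(b^{n\ell}-1)\le y\,\rad(b^\ell-1)<b^{n\ell/q+\ell}$; with that, the single linear inequality $n\ell/(1+\epsilon)-1-n\ell/q-\ell\le \log(C'_\epsilon)/\log b$ at, say, $\epsilon=1/24$ already separates admissible from inadmissible triples exactly. A second, more serious gap: the theorem asserts finiteness of the set of sextuples, not finiteness per triple, and there are infinitely many inadmissible triples. You must therefore show that all but finitely many inadmissible triples admit no solution at all. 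Growth of the exponent functional in $n$ and $\ell$ handles those parameters, but the functional stays bounded as $q\to\infty$ (e.g., along $(q,2,2)$), so large $q$ requires a separate argument; the paper combines $y<b^{n\ell/q}$ with $y\ge 2$ to bound $q$. Your proposal omits this entirely.

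For part (b) you correctly dispose of $(n,\ell)=(2,1)$, of $(2,4,1)$ (your $c=(b+1)/2$ with $b^2+1=2v^2$ matches the paper verbatim), and of the family $(q,n)=(2,2)$, where your Hensel construction forcing $p^2\mid b^\ell+1$ is the same mechanism as the paper's Lemma on $b^{2^t}\equiv -1 \pmod{p^2}$ --- though you still owe the check that an integer $m$ with $b^{\ell-1}\le sm^2<b^\ell$ exists, which is exactly where the square factor $p^2$ is used (if $b^\ell+1$ is squarefree the interval need not contain an integer). The remaining six sporadic triples, however, are left as ``similar parametrizations should work,'' and that is where most of the content of part (b) lives: each needs a specific nontrivial ansatz, e.g., integral points on $3y^2=x^2+x+1$ via norm $-3$ elements of $\Zee[\sqrt{3}]$ for $(2,3,1)$; the same curve with the extra congruence $49\mid x^2-x+1$, imposed by working modulo $98\Zee[\sqrt{3}]$, for $(2,3,2)$; $343y^2=x^2+x+1$ in $\Zee[\sqrt{7}]$ for $(3,3,1)$, whose solutions transform algebraically into solutions of $(3,2,3)$; and Pell solutions of $2y^2=x^2+1$ with $13\mid y$ and $c=2^3\cdot 3^4\cdot 13^{-4}y^2$ for $(4,2,2)$. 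As written, part (b) is a proof for three of the nine admissible families and a promissory note for the other six.
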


In Section~\ref{abc-section} we prove (a) (as Theorem~\ref{abc}) and in
Section~\ref{admiss-section} we prove (b).  

One appealing distinction
between the Nagell-Ljunggren problem and the variant considered here is
that, for fixed $n$ and $q$, finding solutions to the classical
equation~\ref{nle} amounts to finding the integral points on a single
affine curve.  Provided that $(q,n) \notin \{ (2,2), (2,3), (3,2) \}$,
the genus of this curve is positive, so Siegel's theorem implies that
it has only finitely many integer points.  On the other hand, in the
variant considered here, for fixed $n$, $q$, and $\ell$, finding
solutions to Eq.~\eqref{equiva} amounts to finding integral points of controlled
height on a family of twists of a single curve, which is well known to
be a hard problem.  Moreover, there is an established literature of
using the $abc$ conjecture to attack such problems; for example, see
\cite{Granville:2007}.

We comment briefly on our representation of words.  In some cases,
particularly if $b \leq 10$, we write a word as a concatenation of digits.
For example, $1234$ is a word of length $4$.  However, if $b > 10$, this
becomes infeasible.  Therefore, for $b \geq 10$, we write
a word using parentheses and commas.  For example,
$(11,12,13,14)$ is a word of length $4$ representing
$40034$ in base $15$.

\section{Implications of the $abc$ conjecture}
\label{abc-section}

Let $\rad(n) = \prod_{p|n} p$ be the radical function,  the product
of distinct primes dividing $n$.  
We recall the $abc$ conjecture of Masser and Oesterl\'e
\cite{Masser:1985,Oesterle:1988}, as follows
(see, e.g., \cite{Stewart&Tijdeman:1986,Nitaj:1996,Browkin:2000,Granville&Tucker:2002,Robert&Stewart&Tenenbaum:2014}):

\begin{conjecture}
For all $\epsilon>0$, there exists a constant $C_\epsilon$ such that for all $a,b,c\in\Zee^+$ with $a+b=c$ and $\gcd(a,b)=1$, we have
$$c\leq C_\epsilon (\rad (abc))^{1+\epsilon}.$$
\end{conjecture}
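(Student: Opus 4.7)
The statement above is the $abc$ conjecture of Masser and Oesterl\'e, one of the deepest open problems in number theory; no widely accepted proof exists, and the authors here invoke it as a hypothesis rather than attempt to establish it. Accordingly, rather than pretending to sketch a complete argument, the plan below describes the frameworks one would have to engage to mount a serious attempt.

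The natural arena is arithmetic geometry, via the Frey--Hellegouarch correspondence. To each primitive triple $a+b=c$ with $\gcd(a,b)=1$ one attaches the elliptic curve $E_{a,b,c}\colon y^2 = x(x-a)(x+b)$, whose conductor has the same prime support as $\rad(abc)$ and whose minimal discriminant is essentially $(abc)^2$. The first step of any attack is to reformulate the desired inequality $c \leq C_\epsilon (\rad(abc))^{1+\epsilon}$ as a height bound on $E_{a,b,c}$: this is essentially the content of Szpiro's conjecture, which is known to be equivalent to the $abc$ conjecture up to constants in the exponent. One then tries to bound the Faltings height of $E_{a,b,c}$ in terms of its conductor, uniformly over all such Frey curves, by a polynomial in $\log \rad(abc)$.

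The main obstacle is precisely the one that has stymied the field for decades: converting the best known unconditional bounds, which are sub-exponential in $\rad(abc)$ and arise from Baker-type estimates of Stewart and Yu for linear forms in $p$-adic logarithms, into a bound polynomial in the radical. Two substantive programs have been proposed to cross this gap. The first is Vojta's framework, in which the $abc$ conjecture becomes a special case of a height inequality on the projective line minus three points, and reduces, via work of Elkies, to a sufficiently effective uniform Mordell conjecture; the obstacle there is that effective uniform Mordell is itself open. The second is Mochizuki's inter-universal Teichm\"uller theory, which proceeds via anabelian geometry and theta-link estimates on mono-theta environments; its correctness is disputed within the expert community, and even a self-contained exposition would require developing that entire machinery.

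In short, the hard part is \emph{everything}: no argument along classical lines is known to recover even the unconditional bound, let alone the polynomial one, and a genuine proof would constitute a major mathematical event. The authors are therefore justified in adopting the conjecture as a hypothesis rather than proving it, and they use it only to derive the finiteness part of Theorem~\ref{main-thm}(a).
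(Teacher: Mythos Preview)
Your assessment is correct and matches the paper's treatment exactly: the statement is labeled and presented as a \emph{conjecture}, not a theorem, and the paper offers no proof whatsoever, merely recalling it as the hypothesis underlying Theorem~\ref{abc}. Your surrounding discussion of Frey curves, Szpiro, Vojta, and Mochizuki is accurate context but goes well beyond anything in the paper, which simply cites the conjecture and moves on.
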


We will need the following technical lemma. Its purpose will become clear in the proof of Theorem \ref{abc}. The proof is a straightforward manipulation of inequalities, but we include it for the sake of completeness.
\begin{lemma}\label{F positive}
Suppose that $q,n,\ell$ are positive integers with $q\geq 2$, $n\geq 2$, and $\ell\geq 1$. Further suppose that $(q,n,\ell)$ is not an admissible triple. Define
$$F(q,n,\ell)=\frac{24}{25}n\ell-1-\frac{n\ell}{q}-\ell.$$
Then $F(q,n,\ell)>0$.
\end{lemma}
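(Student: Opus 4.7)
The plan is to rewrite $F$ in a form where its sign is transparent. Factoring out $\ell$, we have
\[
F(q,n,\ell) = \ell\bigl(n\,g(q) - 1\bigr) - 1, \qquad \text{where } g(q) := \frac{24}{25} - \frac{1}{q}.
\]
Thus $F(q,n,\ell) > 0$ is equivalent to $\ell\bigl(n\,g(q) - 1\bigr) > 1$, and since $g$ is strictly increasing in $q$, we have the uniform lower bound $g(q) \geq g(2) = 23/50$ for every $q \geq 2$.

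First I would dispense with the ``degenerate slope'' case $n\,g(q) \leq 1$, in which $F \leq -1 < 0$. For $n \geq 3$ the bound $g(q) \geq 23/50$ gives $n\,g(q) \geq 69/50 > 1$, so we must have $n = 2$; and then $2\,g(q) \leq 1$ forces $g(q) \leq 1/2$, which for integer $q \geq 2$ only allows $q = 2$. But $(2,2,\ell)$ is admissible for every $\ell$ by the first clause of the definition, so this case contributes no counterexample. From here on $n\,g(q) > 1$, so $F$ is strictly increasing in $\ell$, and it suffices, for each pair $(q,n)$, to compute the smallest threshold $\ell_0(q,n)$ with $F(q,n,\ell_0) > 0$ and to verify that every triple $(q,n,\ell)$ with $1 \leq \ell < \ell_0(q,n)$ appears on the admissibility list.

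The tail $q \geq 5$ I would handle in one stroke via $g(q) \geq g(5) = 19/25$: for $n = 2$ this gives $F \geq 13\ell/25 - 1 > 0$ as soon as $\ell \geq 2$, and $(n,\ell) = (2,1)$ is admissible by the second clause; for $n \geq 3$ one gets $F \geq 32\ell/25 - 1 > 0$ already at $\ell = 1$. The remaining cases $q \in \{2,3,4\}$ with $n$ small reduce to a short finite verification: for each $(q,n)$ the threshold is read off a single linear inequality. For example, $F(3,2,\ell) = 19\ell/75 - 1$ yields $\ell_0 = 4$, matching the admissible triples $(3,2,1), (3,2,2), (3,2,3)$; $F(4,2,\ell) = 42\ell/100 - 1$ yields $\ell_0 = 3$, matching $(4,2,1), (4,2,2)$; and so on for $(q,n) = (2,3), (2,4), (3,3)$, with the remaining pairs $(q,n)$ giving $\ell_0 = 1$.

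There is no real obstacle; the argument is pure bookkeeping. The only thing that requires care is confirming that, in the finite portion of the case split, the enumerated admissibility list matches the set of subthreshold triples exactly; this coincidence is precisely what forces the particular seven-element list in the statement to have the shape it does.
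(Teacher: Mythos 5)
Your proof is correct, and it takes a genuinely different --- and in fact more careful --- route than the paper's. The paper splits into the two cases ``$n\geq 3$'' and ``$q\geq 3$, $\ell\geq 2$'' and in each case tries to finish with a single uniform monotonicity bound: in the first it reduces to $n=3$ and asserts that $\frac{75\ell}{47\ell-25}<2$ for all $\ell\geq 1$, and in the second it asserts that $25\left(\frac{\ell+1}{\ell}\right)\left(\frac{q}{24q-25}\right)$ is increasing in $q$ and bounded by $\frac{75}{48}$. Both assertions are false as printed (the first quantity equals $\frac{75}{22}>3$ at $\ell=1$, and the second is \emph{decreasing} in $q$ with value $\frac{225}{94}>2$ at $(q,\ell)=(3,2)$), so the paper's argument does not actually dispose of the boundary triples such as $(2,5,1)$, $(4,3,1)$, or $(5,2,2)$, even though the lemma is true there. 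Your approach --- writing $F=\ell\bigl(n\,g(q)-1\bigr)-1$, isolating the degenerate-slope case (which lands exactly on the admissible family $(2,2,\ell)$), using monotonicity in $q$, $n$, and $\ell$ to reduce to finitely many pairs $(q,n)$, and then matching the sub-threshold triples against the admissibility list --- is precisely the bookkeeping needed to handle those boundary cases correctly, and it has the added virtue of explaining why the seven-element exceptional list is what it is. The one step you should spell out is that ``the remaining pairs $(q,n)$ give $\ell_0=1$'' reduces, by monotonicity in $n$, to the three evaluations $F(2,5,1)=\frac{3}{10}$, $F(3,4,1)=\frac{38}{75}$, and $F(4,3,1)=\frac{13}{100}$; with that made explicit the argument is complete.
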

\begin{proof}
If $(q,n,\ell)$ is not admissible, then either $n\geq 3$ or both $q\geq 3$ and $\ell\geq 2$. 

First assume that $n\geq 3$. Then
\[
F(q,n,\ell)=n\ell\left(\frac{24}{25} -\frac{1}{q}\right)-1-\ell \geq \frac{47}{25}\ell - \frac{3\ell}{q}-1.
\]
This quantity is positive if and only if
\[
q\geq \frac{75\ell}{47\ell-25}.
\]
For $\ell\geq 1$, the quantity $\frac{75\ell}{47\ell-25}$ is strictly less than $2$, and $q\geq 2$, so $F(q,n,\ell)>0$.

Now assume instead that $q\geq 3$ and $\ell\geq 2$. Rearranging the inequality in a different way, we see that $F(q,n,\ell)>0$ if and only if
\[
n\geq\frac{\ell q+q}{\frac{24}{25}\ell q -\ell}= 25\left(\frac{\ell+1}{\ell}\right)\left(\frac{q}{24 q-25}\right).
\]
This quantity is decreasing in $\ell$ and increasing in $q$ for all $\ell\geq 1$ and $q\geq 2$, and it is strictly less than $\frac{75}{48}$, which is less than 2. As $n\geq 2$, $F(q,n,\ell)$ is positive in this case as well.
\end{proof}

\begin{theorem}
Assume the $abc$ conjecture.
There are only finitely many solutions $(q,n,\ell,b,y,c)$ to the 
generalized Nagell-Ljunggren equation such that $(q,n,\ell)$ is an inadmissible triple.
\label{abc}
\end{theorem}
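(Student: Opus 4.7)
My plan is to apply the $abc$ conjecture to the identity
\[
\Phi\cdot(b^\ell-1) + 1 = b^{n\ell},\qquad \Phi:=\frac{b^{n\ell}-1}{b^\ell-1}=1+b^\ell+\cdots+b^{(n-1)\ell}.
\]
The key observation is that the generalized Nagell--Ljunggren equation \eqref{equiva} can be rewritten as $y^q=c\Phi$, so $\Phi\mid y^q$ and every prime dividing $\Phi$ divides $y$. Hence $\rad(\Phi)\leq\rad(y)\leq y$: although $\Phi$ has size roughly $b^{(n-1)\ell}$, its radical is bounded by $y<b^{n\ell/q}$, which follows from $y^q=c\Phi\leq b^{n\ell}-1$.

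Since $\gcd(\Phi(b^\ell-1),1)=1$ trivially, the $abc$ conjecture applied to the displayed identity gives
\[
b^{n\ell}\leq C_\epsilon\bigl(\rad(\Phi)(b^\ell-1)b\bigr)^{1+\epsilon}\leq C_\epsilon\bigl(y\cdot b^{\ell+1}\bigr)^{1+\epsilon}\leq C_\epsilon\,b^{(1+\epsilon)(n\ell/q+\ell+1)}.
\]
Setting $\epsilon=1/24$ and taking logarithms, this rearranges to $F(q,n,\ell)\log b\leq\tfrac{24}{25}\log C_\epsilon$, where $F$ is as in Lemma~\ref{F positive}. Since $F>0$ for inadmissible triples by the lemma, $b$ is bounded in terms of $F^{-1}$ for each such triple.

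To bound $q,n,\ell$ as well, I would use the trivial inequality $y\geq 2$, which together with $y^q<b^{n\ell}$ gives $q\leq n\ell\log_2 b$. Combining with the $abc$ bound above yields $qF\leq\tfrac{24}{25}n\ell\log_2 C_\epsilon$, and together with $F\leq\tfrac{24}{25}\log_2 C_\epsilon$ (from $b\geq 2$), routine manipulation of the explicit form of $F$ bounds $q$, $n$, and $\ell$ by absolute constants. With all parameters bounded, the values $c<b^\ell$ and $y<b^{n\ell/q}$ lie in a finite set, so there are only finitely many tuples in total.

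The main difficulty is this last bootstrap step. The infimum of $F$ over inadmissible triples is in fact zero (for example $F(5,2,2)=1/25$ and $F(3,2,4)=1/75$), so the $abc$ inequality by itself yields no uniform bound on $b$; one must combine it with the auxiliary constraint $2^q\leq b^{n\ell}$ coming from $y\geq 2$ to pin down $q$ and close the argument.
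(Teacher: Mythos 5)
Your proposal is correct and follows essentially the same route as the paper: the same application of the $abc$ conjecture to $(b^{n\ell}-1)+1=b^{n\ell}$ with the radical of $b^{n\ell}-1=\Phi\cdot(b^\ell-1)$ controlled via $\rad(\Phi)\leq y<b^{n\ell/q}$, the same reduction to the positivity of $F$ from Lemma~\ref{F positive} to bound $b$ for each inadmissible triple, the growth of $F$ in $n$ and $\ell$ to bound those parameters, and the constraint $y\geq 2$ to bound $q$ (the paper's inequality~(\ref{inequality 2})). One small inaccuracy in your closing remark, which does not affect the argument: the infimum of $F$ over inadmissible triples is attained and positive (since $F\to\infty$ in $n$ and $\ell$, only finitely many inadmissible triples have $F$ below any fixed bound), so the $abc$ inequality does give a uniform bound on $b$; your real point --- that $F$ stays bounded as $q\to\infty$ for fixed $n,\ell$, so the auxiliary constraint $y\geq 2$ is needed to pin down $q$ --- is correct and is exactly how the paper closes the argument.
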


\begin{proof}
The equation $(y^q)_b=w\uparrow n$ can be written
\[
y^q = c\left(\frac{b^{n\ell}-1}{b^\ell-1}\right)
\]
for $c\in\Zee$ such that $(c)_b=w$. 
Note that $c\leq b^\ell-1$, so $y< b^{n\ell/q}$.

Suppose $p$ is a prime that divides $\frac{b^{n\ell}-1}{b^\ell-1}$. Then $p$ divides $y^q$, and thus $p^q$ divides $y^q$. Therefore
\[
y^q\geq \left(\rad\left(\frac{b^{n\ell}-1}{b^\ell-1}\right)\right)^q\geq
\frac{(\rad(b^{n\ell}-1))^q}{(\rad(b^\ell-1))^q},
\]
where we have used the obvious inequality $\rad(a/b)\geq \rad(a)/\rad(b)$. So
\[
\rad(b^{n\ell}-1)\leq y\rad(b^\ell-1)< yb^\ell<b^{n\ell/q+\ell},
\]
using $y< b^{n\ell/q}$ and $\rad(b^\ell-1)<b^\ell$.

Now consider the equation
\[
(b^{n\ell}-1)+1=b^{n\ell}.
\]
By the $abc$ conjecture, for all $\epsilon>0$, there is some positive constant $C_\epsilon$ such that
\[
b^{n\ell}\leq C_\epsilon (\rad((b^{n\ell}-1)(1)b^{n\ell}))^{1+\epsilon}\leq 
C_\epsilon(b\rad(b^{n\ell}-1))^{1+\epsilon},
\]
using that $b^{n\ell}$ and $b^{n\ell}-1$ are coprime and that $\rad(b^{n\ell})=\rad(b)\leq b$. We rewrite this inequality as
\[
b^{n\ell/(1+\epsilon)-1}\leq C_\epsilon^{1/(1+\epsilon)}\rad(b^{n\ell}-1).
\]
Set $C_\epsilon'=C_\epsilon^{1/(1+\epsilon)}$. 
Combining the upper and lower bounds on $\rad(b^{n\ell}-1)$,  we get
\[
b^{n\ell/(1+\epsilon)-1}\leq C_\epsilon' b^{n\ell/q+\ell}.
\]
Rearranging this, we have
\[
b^{n\ell /(1+\epsilon)-1-n\ell/q-\ell}\leq C_\epsilon',
\]
or equivalently,
\begin{equation}\label{fundamental inequality}
\frac{n\ell}{(1+\epsilon)}-1-\frac{n\ell}{q}-\ell \leq\frac{\log(C_\epsilon')}{\log(b)}.
\end{equation}
Recall that $y<b^{n\ell/q}$, or equivalently
\[
\frac{1}{\log(b)}<\frac{n\ell}{q\log(y)}.
\]
Therefore
\begin{equation}\label{inequality 2}
\frac{n\ell}{(1+\epsilon)}-1- \frac{n\ell}{q} -\ell \leq \log(C_\epsilon')\frac{n\ell}{q \log(y)}.
\end{equation}
In order for the triple $(q,n,\ell)$ to give rise to a solution of $(y^q)_b=w\uparrow n$, 
it is necessary that inequalities (\ref{fundamental inequality}) and (\ref{inequality 2}) are 
both satisfied. This puts restrictions on $b$ and $y$, respectively.

From this point forward, fix $\epsilon=\frac{1}{24}$. (Any fixed choice of $\epsilon<\frac{1}{23}$ would work for our purposes.) Let
\[
F(q,n,\ell)=\frac{n\ell}{(1+\epsilon)}-1-\frac{n\ell}{q}-\ell .
\]
It is easy to see that $F$ is increasing in $q$. We will soon see 
that $F$ is also increasing in $n$ and $\ell$ when $(q,n,\ell)$ is inadmissible.

It can be verified by an explicit calculation that $F(q,n,\ell)<0$ for all admissible triples $(q,n,\ell)$, including the infinite families with $(q,n)=(2,2)$ and $\ell$ arbitrary or $(n,\ell)=(2,1)$ and $q$ arbitrary. By Lemma \ref{F positive}, for every inadmissible triple $(q,n,\ell)$ we have $F(q,n,\ell)>0$, so there are only finitely many $b$ that satisfy inequality (\ref{fundamental inequality}). We will show that for large values of $n$ or $\ell$, no bases $b\geq 2$ satisfy (\ref{fundamental inequality}), and for large values of $q$, no $y\geq 2$ satisfy (\ref{inequality 2}) (clearly $y=1$ never gives a solution). Therefore, conditional on the $abc$ conjecture, there are only finitely many solutions to the generalized Nagell-Ljunggren equation that come from inadmissible parameters.

First we consider large values of $n$ or $\ell$ by computing lower bounds on the partial derivatives of $F$. 
Assume that $(q,n,\ell)$ is not admissible, 
and therefore either $n\geq 3$ and $q\geq 2$ or 
$n\geq 2$ and $q\geq 3$. Then we have lower bounds on the partial derivatives as follows:
\begin{align*}
\frac{\partial F}{\partial n} & = \ell\left(\frac{1}{1+\epsilon} - \frac{1}{q}\right)\geq 
1\left(\frac{1}{1+\epsilon} - \frac{1}{2}\right)= \frac{23}{50}\\
\frac{\partial F}{\partial \ell} & = n\left(\frac{1}{1+\epsilon}-\frac{1}{q}\right)-1\geq
\min\left(\frac{19}{75},\frac{19}{50}\right)=\frac{19}{75}
\end{align*}
If $n\geq 5$, then we have
\[
F(q,n,\ell)\geq F(2,n,1)\geq\frac{23}{50}(n-5)+F(2,5,1)>\frac{23}{50}(n-5),
\]
If $\ell\geq 5$, we have
\begin{align*}
F(q,n,\ell) & \geq \min(F(3,2,\ell),F(2,3,\ell))\\
& \geq\min\left((\ell-4)\frac{19}{75}+F(3,2,4),(\ell-5)\frac{19}{75}+F(2,3,3) \right) \\
& > \frac{19}{75}(\ell-4).
\end{align*}
Importantly, in the above calculations we have used both that $F(q,n,\ell)>0$ and that $F$ is increasing in $q$, $n$, and $\ell$ for all inadmissible triples. So $F(q,n,\ell)\to\infty$ as either $n\to\infty$ or $\ell\to\infty$. Thus for large values of either $n$ or $\ell$, 
inequality (\ref{fundamental inequality}) is not satisfied for any $b\geq 2$,
and there are no solutions to $(y^q)_b=w\uparrow n$.

It remains to show that large values of $q$ cannot be used in solutions. 
First we rewrite inequality (\ref{inequality 2}) as
\[
q\left(\frac{1}{1+\epsilon}-\frac{1}{n\ell}-\frac{1}{n}\right) \leq \frac{\log(C_\epsilon')}{\log(y)}+1.
\]
If $(q,n,\ell)$ is inadmissible, then either $n\geq 3$ and $\ell\geq 1$ or $n\geq 2$ and $\ell\geq 2$. So
\[
\frac{1}{n\ell}+\frac{1}{n} = \frac{1}{n}\left(1+\frac{1}{\ell}\right)\leq \frac{3}{4}
\]
and
\[
\frac{21q}{100}=q\left(\frac{24}{25}-\frac{3}{4}\right)\leq \frac{\log(C_\epsilon')}{\log(y)}+1\leq\frac{\log(C_\epsilon')}{\log(2)}+1,
\]
where we have replaced $y$ with $2$, which is the smallest value of $y$ that can be used in a solution. So for inadmissible triples $(q,n,\ell)$ with large values of $q$, inequality (\ref{inequality 2}) is not satisfied, and there are no solutions.

We have shown that there are only finitely many inadmissible triples that admit any solutions. By Lemma \ref{F positive} and inequality (\ref{fundamental inequality}), there are only finitely many bases $b$ that can appear in a solution corresponding to each such triple, and thus only finitely many solutions for each such triple. So the set of all inadmissible triples contributes in total only finitely many solutions.
\end{proof}
\begin{remark}
Shinichi Mochizuki, in a series of papers released in 2016, has
recently claimed a proof of the $abc$ conjecture.  If the proof is ultimately
verified, then Theorem~\ref{abc} will hold unconditionally.
\end{remark}

\section{Admissible triples}
\label{admiss-section}

In this section we examine each admissible triple and prove there are
infinitely many solutions.

\subsection{The case $(q,n) = (2,2)$}

\begin{theorem}
For each length $\ell \geq 1$, there are infinitely
many $b \geq 2$ such that the equation
$(y^2)_b = w \tothe 2$ has a solution with $|w| = \ell$.
\label{c22}
\end{theorem}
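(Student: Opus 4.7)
The plan is to find an infinite family of bases $b$ for which $b^\ell+1$ carries a controllable squared divisor, and then use that divisor to produce a suitable $c$ and $y$. Concretely, I would begin by invoking Dirichlet's theorem to choose a prime $p\equiv 1\pmod{2\ell}$. Since $2\ell$ divides $p-1$, the cyclic group $\mathbb{F}_p^\times$ contains an element of order $2\ell$; its $\ell$-th power has order $2$ and so equals $-1$, showing that $-1$ is an $\ell$-th power residue modulo $p$. Hensel's lemma (applicable since $p\nmid\ell$ and $p\nmid b_0$) then lifts this to a residue $b_0$ with $b_0^\ell\equiv -1\pmod{p^2}$.

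For every $b\equiv b_0\pmod{p^2}$, one has $p^2\mid b^\ell+1$. Writing $b^\ell+1=D E^2$ with $D$ squarefree, a comparison of $p$-adic valuations ($v_p(D)\le 1$ versus $v_p(DE^2)\ge 2$) forces $p\mid E$, hence $E\ge p$ and $D\le (b^\ell+1)/p^2$. For each positive integer $m$, the choice $c=Dm^2$ and $y=DEm$ satisfies
\[
y^2=(DEm)^2=(DE^2)(Dm^2)=(b^\ell+1)\,c
\]
identically. It remains to realize the digit-range condition $b^{\ell-1}\le c<b^\ell$, which is equivalent to $m\in\bigl[\sqrt{b^{\ell-1}/D},\,\sqrt{b^\ell/D}\bigr)$. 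Using $D\le (b^\ell+1)/p^2$, the length of this interval satisfies
\[
\sqrt{b^{\ell-1}/D}\,(\sqrt{b}-1)\;\ge\;\frac{p\bigl(\sqrt{b^\ell}-\sqrt{b^{\ell-1}}\bigr)}{\sqrt{b^\ell+1}}\;\longrightarrow\;p
\]
as $b\to\infty$, so for all sufficiently large $b$ in the progression $b\equiv b_0\pmod{p^2}$ it contains an integer $m\ge 1$, producing a valid triple $(b,y,c)$. Since the arithmetic progression is infinite, we obtain infinitely many solutions.

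The main obstacle is that if $b^\ell+1$ happened to be squarefree, then $D=b^\ell+1$, the $m$-interval would sit inside $[0,1)$, and no solution to $y^2=c(b^\ell+1)$ with $c$ in the required range would exist at all. Securing an infinite family of $b$ with a guaranteed squared prime factor of $b^\ell+1$ is thus the crux of the argument, and the Dirichlet-plus-Hensel construction supplies it cleanly for every $\ell\ge 1$; after that the remaining estimate on the $m$-interval is routine.
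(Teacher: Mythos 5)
Your argument is correct, and it is built on the same central idea as the paper's proof: engineer a prime $p$ with $p^2 \mid b^\ell+1$ for infinitely many $b$, then use the resulting square factor to scale $c$ into the window $[b^{\ell-1},b^\ell)$ via $y^2=c(b^\ell+1)$. The routes to that square factor differ, though. The paper writes $\ell=r\cdot 2^t$ with $r$ odd, takes $p\equiv 1\pmod{2^{t+1}}$, and uses cyclicity of $(\mathbb{Z}/p^2\mathbb{Z})^\times$ to produce $b$ of order $2^{t+1}$ with $b^{2^t}\equiv -1\pmod{p^2}$; the infinitude of distinct $b$ then requires a separate indirect argument (a fixed $b$ can pair with only finitely many $p$). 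You instead take $p\equiv 1\pmod{2\ell}$, solve $x^\ell\equiv-1\pmod p$, and Hensel-lift, which yields an entire arithmetic progression of valid $b$ for one fixed $p$ and makes the infinitude immediate --- a cleaner packaging of the same number-theoretic input. For the digit-range step, the paper works with $b^\ell+1=mp^2$ and the explicit choice $v=\lceil p/\sqrt{b}\rceil$, $c=mv^2$, checking both inequalities by hand for $p\geq 5$; you extract the full square part $b^\ell+1=DE^2$ and show the interval of admissible $m$ has length tending to $p>1$, trading explicit constants for an asymptotic statement, which is harmless since only sufficiently large $b$ in the progression are needed. Your closing observation --- that a squarefree $b^\ell+1$ admits no $c$ in the required range, so the squared prime factor is the crux --- is exactly the structural point the paper's Lemma on pairs $(p,b)$ is designed to address.
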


We need a lemma.

\begin{lemma}
For each integer $t \geq 0$ there exist infinitely
many integer pairs $(p,b)$ where $p \geq 2$ is prime and
$b \geq 2$ such that
$b^{2^t} \equiv \modd{-1} {p^2}$.  Furthermore, among
these pairs there are infinitely many distinct $b$.
\label{pbl}
\end{lemma}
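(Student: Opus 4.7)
The plan is to reduce the congruence $b^{2^t}\equiv -1\pmod{p^2}$ to an order computation in the group $(\Zee/p^2\Zee)^\ast$. For every odd prime $p$, this group is cyclic of order $p(p-1)$, and its unique element of order $2$ is $-1$. Hence $b^{2^t}\equiv -1\pmod{p^2}$ holds precisely when $b$ has multiplicative order exactly $2^{t+1}$ modulo $p^2$, which forces $2^{t+1}\mid p(p-1)$, equivalently $2^{t+1}\mid p-1$ (since $p$ is odd).

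To produce the desired pairs when $t\geq 1$, I would invoke Dirichlet's theorem on primes in arithmetic progressions to assert the existence of infinitely many primes $p\equiv 1\pmod{2^{t+1}}$. For each such $p$, the cyclic group $(\Zee/p^2\Zee)^\ast$ has order divisible by $2^{t+1}$ and therefore contains elements of order exactly $2^{t+1}$ (in fact $\varphi(2^{t+1})=2^t$ of them). Choosing any representative $b\in\{2,3,\ldots,p^2-1\}$ yields a valid pair $(p,b)$, and letting $p$ vary gives infinitely many such pairs. The case $t=0$ is handled more directly by taking $b=p^2-1$ for each odd prime $p$.

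For the ``furthermore'' clause, I would observe that any solution satisfies $p^2\mid b^{2^t}+1$, so $b\geq(p^2-1)^{1/2^t}$. Selecting for each admissible prime $p$ the least solution $b_p\in[2,p^2-1]$, this bound forces $b_p\to\infty$ as $p\to\infty$, so infinitely many distinct values of $b$ appear. I do not foresee any real obstacle here: the argument rests only on Dirichlet's theorem and the cyclicity of $(\Zee/p^2\Zee)^\ast$, both classical, together with the elementary lower bound on $b$ in terms of $p$. The one detail worth stating carefully is the identification of the unique element of order $2$ in $(\Zee/p^2\Zee)^\ast$ with $-1$, which follows because $X^2-1$ has only the roots $\pm 1$ modulo the prime power $p^2$ for $p$ odd.
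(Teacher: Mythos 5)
Your construction of the pairs is the same as the paper's: Dirichlet's theorem gives infinitely many primes $p\equiv \modd{1} {2^{t+1}}$, and cyclicity of $(\Zee/p^2\Zee)^\ast$ (of order $p(p-1)$) then supplies an element $b$ of order $2^{t+1}$, whose $2^t$-th power is the unique element of order two, namely $-1$; you spell out the last identification slightly more carefully than the paper does, which is harmless. The only genuine divergence is in the ``furthermore'' clause. The paper argues by contradiction: for fixed $t$ and $b$ the integer $b^{2^t}+1$ has only finitely many prime divisors, so finitely many $b$'s could account for only finitely many pairs. You instead use the direct quantitative bound $p^2 \divides b^{2^t}+1 \Rightarrow b\geq (p^2-1)^{1/2^t}$, which forces the $b$'s to grow with $p$ and hence to take infinitely many values. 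Both arguments are correct; yours is marginally more explicit (it shows the admissible $b$'s are unbounded, not merely infinite in number), while the paper's is a softer finiteness argument. No gaps.
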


\begin{proof}
By Dirichlet's theorem on primes in arithmetic
progressions, there are infinitely many primes
$p \equiv \modd{1} {2^{t+1}}$.  The group $G$ of integers
modulo $p^2$ is cyclic, and of order $p(p-1)$.  
Since $2^{t+1} \divides p-1$, there is an element
$b$ of order $2^{t+1}$ in $G$.  For this element
$b$ we have $b^{2^t} \equiv \modd{-1} {p^2}$.

To prove the last claim, note that for each fixed $t$
and fixed $b$ there are are only finitely many prime divisors
of $b^{2^t} + 1$.     If there were only finitely many distinct
$b$ among those pairs $(p,b)$ with
$b^{2^t} \equiv \modd{-1} {p^2}$, then there would only
be, in total, finitely many pairs $(p,b)$, contradicting
what we just proved.
\end{proof}

Now we can prove Theorem~\ref{c22}.

\begin{proof}
Let $\ell = r \cdot 2^t$, where $r$ is odd.
By Lemma~\ref{pbl} we know there exist infinitely
many $p$ and $b$ such that $b^{2^t} \equiv 
\modd{-1} {p^2}$.    Then
$b^\ell = b^{r \cdot 2^t} \equiv \modd{-1} {p^2}$.

Now write $b^\ell + 1 = m p^2$.  Then
$m^2 (b^\ell +1) = m^2 p^2$.    Choose
$v = \lceil {p \over {\sqrt{b}}} \rceil$.
Then
$$ {p \over {\sqrt{b}}} \leq v \leq {p \over {\sqrt{b}}} + 1,$$
so
$$ {{p^2} \over b} m \leq m v^2 \leq m \left({p \over {\sqrt{b}}} + 1 \right)^2 .$$
Hence
$$mv^2 \geq mp^2/b = {{{b^\ell} + 1} \over b} \geq b^{\ell - 1}.$$
Similarly, if $p \geq 5$, then
${p \over {\sqrt{2}}} + 1 \leq {p \over {1.1}},$
so 
$$ m v^2 \leq m \left({p \over {\sqrt{b}}} + 1 \right)^2
\leq m \left( {p \over {1.1}} \right)^2 \leq mp^2 - 1$$
if $p \geq 5$.

Then $(mvp)^2 = (mv^2) (b^\ell + 1)$.  The inequalities obtained
above imply that $mv^2$ in base $b$ is an $\ell$-digit number, so
the base-$b$ representation of
$(mvp)^2$ consists of two copies of $(mv^2)_b$, as desired.

From the second part of the Lemma, we get that there
are infinitely many $b$ corresponding to each length $\ell$.
\end{proof}

\begin{example}
Take $\ell = 12$.  Then $r = 3$ and $t = 2$.  If
$b = 110$ and $p = 17$, then $b^4 \equiv \modd{-1} {17^2}$.
Write $b^\ell + 1 = m \cdot p^2$, where $m = 10859613760280276816609$.
Let $v = \lceil {p \over {\sqrt{b}}} \rceil = 2$.
Then $mvp = 369226867849529411764706$ and
$((mvp)^2)_{b} = w\tothe 2$
where $$ w = [1, 57, 52, 15, 108, 52, 57, 94, 1, 57, 52, 16].$$
\end{example}

We now examine this case from a different angle, considering
$b$ to be fixed and examining for which pairs $(y,w)$ there
are solutions to $(y^2)_b = w\tothe 2$.

\begin{theorem}
For each base $b\geq 2$, the equation 
$$  (y^2)_b = w\tothe 2$$
has infinitely many solutions $(y,w)$.
\label{baseb}
\end{theorem}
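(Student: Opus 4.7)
The plan is to mirror the argument of Theorem \ref{c22} with the roles of $b$ and $\ell$ interchanged: fix the base $b$ together with a single auxiliary prime $p$ depending on $b$, and let $\ell$ range over an infinite arithmetic progression. The main obstacle is arranging that a single prime $p$ both meets the inequality hypothesis ($p\geq 5$) driving the estimates in Theorem \ref{c22} and satisfies a congruence condition ensuring $p^2\mid b^\ell+1$ for infinitely many $\ell$; the smallest odd prime factor of $b^2+1$ turns out to fulfil both requirements.

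To select $p$, observe that $b^2+1\geq 5$, and since squares modulo $3$ lie in $\{0,1\}$, no prime factor of $b^2+1$ equals $3$; hence the smallest odd prime factor $p$ of $b^2+1$ satisfies $p\geq 5$. For this $p$, the relation $b^2\equiv -1\pmod{p}$ shows that $b$ has order exactly $4$ modulo $p$. Because $p$ is odd, the order of $b$ modulo $p^2$ is either the order modulo $p$ or $p$ times it, and in either case is even; write this order as $2m$. Then $b^m\equiv -1\pmod{p^2}$, and consequently $b^\ell\equiv -1\pmod{p^2}$ for every $\ell$ in the arithmetic progression $\ell\equiv m\pmod{2m}$, giving an infinite supply of admissible exponents.

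For each such $\ell$ I run the construction of Theorem \ref{c22} essentially unchanged: write $b^\ell+1=Mp^2$, set $v=\lceil p/\sqrt{b}\rceil$, and put $y=Mvp$, $c=Mv^2$. Then $y^2=c(b^\ell+1)$, and the same two-sided estimate used in Theorem \ref{c22} gives $b^{\ell-1}\leq Mv^2 < b^\ell$ for all sufficiently large $\ell$ in the progression (the estimate only requires $p\geq 5$ and that $b^\ell$ be large enough to absorb the constant $1.21$ that appears there, both of which we have). Hence $w=(Mv^2)_b$ is a length-$\ell$ word and $(y^2)_b=w\tothe 2$. Varying $\ell$ through the progression produces words $w$ of distinct lengths, and therefore infinitely many distinct solutions $(y,w)$, as desired.
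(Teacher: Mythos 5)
Your argument is correct in substance and follows the same overall strategy as the paper's proof of Theorem~\ref{baseb}: produce a prime $p\geq 5$ modulo whose square $b$ has even order $2m$, deduce $b^\ell\equiv -1\pmod{p^2}$ for all $\ell$ in an infinite arithmetic progression, and rescale $b^\ell+1$ as in Theorem~\ref{c22}. Where you differ is in the execution, and in both places your version is leaner. The paper isolates the existence of $p$ as Lemma~\ref{p2} and proves it by a case analysis (first looking for a prime $\geq 5$ dividing $b+1$, then ruling out that $b^2+1$ has only the prime factors $2$ and $3$); you go straight to the smallest odd prime factor of $b^2+1$ and note that it cannot be $3$ since $-1$ is not a square modulo $3$. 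More significantly, the paper's own proof of Theorem~\ref{baseb} rescales by $t^4/p^2$ for an integer $t$ in the interval $\left(b^{-1/4}\sqrt{p},\,\sqrt{9p/10}\right)$, which it can only guarantee contains an integer when $b\geq 16$; this forces a table of ad hoc pairs $(p,t)$ for $2\leq b\leq 15$ and a further adjustment of $e$ for $b\in\{2,3,4\}$ to ensure $b^{re}\geq 5$. By reusing $v=\lceil p/\sqrt{b}\rceil$ and $c=Mv^2$ from Theorem~\ref{c22}, you need no tables at all, at the harmless cost of discarding finitely many small $\ell$ from the progression.

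One step does need to be added. Your choice of $p$ presupposes that $b^2+1$ has an odd prime factor, i.e., that $b^2+1$ is not a power of $2$; as written, you only rule out the factor $3$. This is true but requires a sentence: if $b$ is even then $b^2+1$ is odd and at least $5$, while if $b$ is odd then $b^2\equiv 1\pmod 4$, so $b^2+1\equiv 2\pmod 4$ and its odd part $(b^2+1)/2\geq 5$ exceeds $1$. With that inserted, the remaining steps --- the order of $b$ modulo $p^2$ lying in $\{4,4p\}$, the identity $b^m\equiv-1\pmod{p^2}$ because $-1$ is the unique element of order $2$ in the cyclic group $(\Zee/p^2\Zee)^*$, and the two-sided estimate $b^{\ell-1}\leq Mv^2\leq b^\ell-1$ once $b^\ell$ is large enough --- are all correct as you state them, and the proof is complete.
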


First, we need a lemma:

\begin{lemma}
For all integers $b \geq 2$, there exists a prime $p \geq 5$ such
that $b$ has even order in the multiplicative group of
integers modulo $p^2$.
\label{p2}
\end{lemma}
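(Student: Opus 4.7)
The plan is to find the required prime $p$ among the divisors of $b^{2}+1$. If a prime $p \geq 5$ divides $b^{2}+1$, then $b^{2} \equiv -1 \pmod{p}$, and since $b^{2} \not\equiv 1 \pmod{p}$ while $b^{4} \equiv 1 \pmod{p}$, the order of $b$ modulo $p$ is exactly $4$. By the standard order-lifting fact, the order of $b$ modulo $p^{2}$ is either the order of $b$ modulo $p$ or $p$ times that order; here both $4$ and $4p$ are even because $p$ is odd. So the lemma reduces to showing that $b^{2}+1$ always has at least one prime factor $\geq 5$.

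To verify this reduced claim, first note that $b^{2}+1$ is never divisible by $3$: modulo $3$ the square $b^{2}$ lies in $\{0,1\}$, so $b^{2}+1 \in \{1,2\}$. Now split on the parity of $b$. If $b$ is even, then $b^{2}+1$ is odd and at least $5$, hence coprime to $6$ and $\geq 5$, so every prime factor is $\geq 5$. If $b$ is odd, then $b \geq 3$ and $b^{2} \equiv 1 \pmod{8}$, so $b^{2}+1 \equiv 2 \pmod{8}$; the quotient $(b^{2}+1)/2$ is therefore an odd integer coprime to $3$ and of size at least $(9+1)/2 = 5$, and so it too must have a prime factor $\geq 5$. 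In either case we obtain the desired prime.

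The argument is entirely elementary, so nothing here constitutes a real obstacle; the one step that needs a bit of care is the parity case analysis ruling out that $b^{2}+1$ consists only of $2$'s and $3$'s. The lifting step is the classical statement that for an odd prime $p$ the order of any integer coprime to $p$ changes by a factor of at most $p$ when passing from $\Zee/p\Zee$ to $\Zee/p^{2}\Zee$, which preserves parity in our setting.
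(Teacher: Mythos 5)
Your proof is correct, and it takes a noticeably more direct route than the paper's. The paper first checks whether $b+1$ has a prime factor $p\geq 5$ (which gives $b$ order $2$ modulo $p$), and only when the prime factors of $b+1$ lie in $\{2,3\}$ does it turn to $b^2+1$; it then rules out the possibility that $b^2+1$ has only the prime factors $2$ and $3$ by a gcd computation and a case analysis ($b+1=2^n$ versus $b+1=2\cdot 3^m$), ending in a contradiction. You go straight to $b^2+1$ and observe two uniform facts: $3\nmid b^2+1$ because $-1$ is not a square modulo $3$, and $4\nmid b^2+1$ because $b^2+1\equiv 1$ or $2 \pmod 4$; together with $b^2+1\geq 5$ these force a prime factor $p\geq 5$ with no case splitting on the shape of $b+1$. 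Both proofs then finish the same way, via order $4$ modulo $p$ and the fact that the order modulo $p^2$ is a multiple of the order modulo $p$ (so evenness is preserved). What your version buys is brevity and the elimination of the contradiction argument; what the paper's version buys is an order-$2$ prime in the generic case, which is of no consequence for the lemma as stated. One cosmetic remark: your parenthetical ``both $4$ and $4p$ are even because $p$ is odd'' is slightly misstated---$4$ and $4p$ are even regardless---but the divisibility of the order modulo $p^2$ by the order modulo $p$ is all that is needed, and you invoke it correctly.
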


\begin{proof}
First observe that if $b$ has even order mod $p$, then it must have
even order mod $p^2$.

If there is some prime $p\geq 5$ that divides $b+1$, then $p$ cannot
also divide $b-1$. Then $b^2\equiv 1\pmod{p}$ and $b\not\equiv
1\pmod{p}$, so $b$ has order 2 mod $p$, and we are done. Therefore it
suffices to prove the Lemma in the case that the primes dividing $b+1$
are a subset of $\{2,3\}$.

We aim to show that there is some prime $p\geq 5$ that divides $b^2+1$.
Then $p$ cannot also divide $b^2-1$, and so
\[b^4-1=(b^2+1)(b^2-1)\equiv 0\pmod{p}\] and $b$ has order 4 mod $p$.
Assume, to get a contradiction, that the only possible prime factors of
$b^2+1$ are $2$ and $3$.

By the Euclidean algorithm, $\gcd(b^2+1,b+1)= \gcd(1-b,b+1)=
\gcd(2,b+1)$, so $2$ is the only possible common prime divisor of both
$b^2+1$ and $b+1$. In particular, it is not possible that both numbers
are divisible by $3$.  Therefore one of $b+1$ or $b^2+1$ is a power of $2$;
thus $b$ is odd, and $\gcd(b+1,b^2+1)=2$. This leaves two possibilities:
either $b+1=2^n$ and $b^2+1=2 \cdot 3^m$, or $b+1=2 \cdot 3^m $
and $b^2+1=2^n$,
for some positive integers $n,m$.

If $b+1=2^n$, then $b+1\equiv 1$ or $2\pmod{3}$, so $b\equiv 0$ or
$1\pmod{3}$. But then $b^2+1$ cannot be divisible by 3. So instead we
must have $b+1=2 \cdot 3^m$, and
\[2^n=b^2+1=(2 \cdot 3^m-1)^2+1=2(2 \cdot 3^{2m}-2 \cdot 3^m+1).\] So $2^n$ is twice
an odd number, and $n=1$. But then $b=1$, which is a contradiction.
\end{proof}

We can now prove Theorem~\ref{baseb}:

\begin{proof}
Fix $b$, and
let $p \geq 5$ be a prime satisfying the conclusion of Lemma~\ref{p2}.
Let the order of $b$, modulo $p^2$, be $e' = 2e$ for some
integers $e', e \geq 1$.  

First, we claim that for all $b \geq 2$ such a $p$ can be chosen
such that there is an integer $t$
with
\begin{equation}
b^{-1/4} \sqrt{p} < t < \sqrt{9p/10} .
\label{ineq1}
\end{equation}

If $b \geq 16$, then the open interval
$(b^{-1/4} \sqrt{p}, \sqrt{9p/10})$ has length $> 1$ if $p \geq 5$, and
hence contains an integer.  

If $2 \leq b < 16$, we can use the $t$ and $p$ in the table below:
\begin{table}[H]
\begin{center}
\begin{tabular}{ccc}
$b$ & $p$ & $t$ \\
\hline
2& 5& 2 \\
3& 5& 2 \\
4& 5& 2 \\
5& 7& 2 \\
6& 7& 2 \\
7& 5& 2 \\
8& 5& 2 \\
9& 5& 2 \\
10& 7& 2\\
11&13& 3\\
12& 5& 2\\
13& 5& 2\\
14& 5& 2\\
15&13& 3\\
\end{tabular}
\end{center}
\end{table}

Hence, from \eqref{ineq1} we get 
$$ p^2/b <t^4 < .81 p^2$$
and so
$$ 1/b < t^4/p^2 < .81. $$

Now consider $z = (t^4/p^2)(b^{re} + 1)$ for odd $r \geq 1$.
Since $b$ has order $2e$ (mod $p^2$), we must have
$b^e \equiv \modd{-1} {p^2}$.   Then for odd $r \geq 1$ we have
$b^{re} \equiv \modd{-1} {p^2}$, and so
$z = {{t^4} \over {p^2}} (b^{re} + 1)$ is an integer.
From the previous paragraph we have
$$b^{re-1} < (t^4/p^2) b^{re} <
(t^4/p^2) (b^{re} + 1) = z,$$
and
$$ z = {{t^2}\over p} (b^{re}+1) < 0.81 (b^{re} + 1) < b^{re},$$
where the very last inequality holds provided $b^{re} \geq 5$.
If $b \geq 5$ this inequality holds for all $e$.  For smaller
$b$, we can choose $e$ as follows to ensure $b^{re} \geq 5$:
\begin{itemize}
\item if $b = 2$ then $p = 5$ and $e = 10$;
\item if $b = 3$ then $p = 5$ and $e = 10$;
\item if $b = 4$ then $p = 5$ and $e = 5$.
\end{itemize}

It follows that the base-$b$ representation of $z$ has
exactly $re$ digits.  Let $w = (z)_b$.  
Finally, note that
$$ [ww]_b = {{t^4} \over {p^2}} (b^{re} + 1) (b^{re} + 1) =
	({{t^2} \over p} (b^{re} + 1))^2 ,$$
so we can take $y = {{t^2} \over p} (b^{re} + 1)$.
\end{proof}

\begin{remark}
For $b = 2$ the solutions $y$ to the equation
$(y^2)_2 = w\tothe 2$ are given by the sequence
$$6, 820, 104391567, 119304648, 858993460, 900719925474100, \ldots,$$
which is sequence \seqnum{A271637} in the 
OEIS \cite{oeis}.
\end{remark}

\subsection{The case $(n,\ell) = (2,1)$}

This case, where $n = 2$ and $\ell = 1$, is the least interesting of
all the cases.  

\begin{proposition}
The equation $(y^q)_b = w \tothe 2$, $|w| = 1$, has infinitely
many solutions $b$ for each $q \geq 2$.
\end{proposition}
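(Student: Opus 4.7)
The plan is to produce an explicit infinite family of bases by choosing the repeated digit to be $1$. Unpacking the definitions, a solution to $(y^q)_b = w\tothe 2$ with $|w|=1$ is an identity
\[
y^q = c(b+1),\qquad 1\le c\le b-1,
\]
where $c$ is the single digit represented by $w$. The constraint $c\ge 1$ ensures that the base-$b$ representation of $y^q$ has no leading zero, so that it really is a two-digit string.

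The construction I would use takes $c=1$. Then the equation collapses to $y^q=b+1$, so for each integer $y\ge 2$ I set $b=y^q-1$. Since $q\ge 2$ and $y\ge 2$, this gives $b\ge 3$, a legitimate base, and the base-$b$ expansion of $y^q = b+1$ is the two-digit string $11$, which matches $w\tothe 2$ with $w=1$. Thus $(b,y)=(y^q-1,\,y)$ is a solution for every $y\ge 2$. Because $y\mapsto y^q-1$ is strictly increasing on $\{2,3,4,\dots\}$, the resulting bases $b=2^q-1,\,3^q-1,\,4^q-1,\dots$ are pairwise distinct, yielding infinitely many $b$ for each fixed $q\ge 2$.

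There is essentially no obstacle in this case. The only bookkeeping is to verify that $c=1$ lies in the allowed range $1\le c<b$ for every $b\ge 2$, and that the two-digit condition $b\le y^q<b^2$ holds; both are immediate from $y^q = b+1$. This confirms the paper's own comment that $(n,\ell)=(2,1)$ is the least interesting admissible case — nothing deep about repetitions or Diophantine equations is needed, only the observation that every integer $\ge 2$ has the shape $b+1$ for some base $b$.
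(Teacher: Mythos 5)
Your proposal is correct and is exactly the paper's argument: rewrite the equation as $y^q = c(b+1)$ with $1 \le c < b$, set $c=1$, and take $b = y^q - 1$ for each $y \ge 2$. The extra bookkeeping you include (distinctness of the bases, the digit-range check) is fine but adds nothing beyond the paper's two-line proof.
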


\begin{proof}
The equation can be rewritten as $y^q = c (b+1)$ for 
$1 \leq c < b$.  Given $q$, we can take $c = 1$, $y \geq 2$,
and $b = y^q - 1$.
\end{proof}

\subsection{The case $(q,n,\ell) = (2,3,1)$}

In this section we show 

\begin{theorem}
There are infinitely many bases $b$
for which the equation
$  (y^2)_b = w\tothe 3$ has a solution with $|w|= 1$.
\end{theorem}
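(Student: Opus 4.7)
The equation $(y^2)_b = w \tothe 3$ with $|w|=1$ is equivalent to
\[
y^2 = c(b^2 + b + 1), \qquad 1 \le c \le b-1,
\]
where $w = (c)_b$ is a single nonzero digit. The plan is to fix $c = 3$ and produce infinitely many $(b,y)$ satisfying $y^2 = 3(b^2+b+1)$. Completing the square on the right gives $(2y)^2 - 3(2b+1)^2 = 9$, so setting $U = 2y$ and $V = 2b+1$ turns this into $U^2 - 3V^2 = 9$. Reducing mod $3$ forces $3 \mid U$ and hence $3 \mid V$, so writing $U = 3U_1$, $V = 3V_1$ reduces the problem to the classical Pell equation
\[
U_1^2 - 3V_1^2 = 1,
\]
whose positive integer solutions form an infinite sequence generated by the fundamental unit $2 + \sqrt{3}$.

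To recover an integer pair $(b,y)$ from $(U_1,V_1)$, the parities must be correct: I need $V_1$ odd (so $V = 3V_1 = 2b+1$ is odd, hence $b$ is an integer) and $U_1$ even (so $y = 3U_1/2$ is an integer). The seed $(U_1,V_1) = (2,1)$ has exactly these parities, and the doubled-unit recurrence
\[
(U_1, V_1) \;\longmapsto\; (7 U_1 + 12 V_1,\ 4 U_1 + 7 V_1),
\]
coming from $(2+\sqrt{3})^2 = 7 + 4\sqrt{3}$, preserves each coordinate modulo $2$ since $7$ is odd and $12, 4$ are even. Iterating thus produces an infinite sequence of valid $(U_1, V_1)$, each yielding $b = (3V_1-1)/2$ and $y = 3U_1/2$ with $c = 3$. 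Since $V_1$ grows geometrically, $b \to \infty$, so the digit condition $c = 3 < b$ is satisfied for every iterate except the seed (which only gives $b = 1$).

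The main obstacle, modest as it is, is the parity bookkeeping that ensures the Pell solutions descend to integer bases and integer $y$; everything else is standard Pell theory. The choice $c = 3$ is essentially forced: $c = 1$ fails because $b^2 < b^2+b+1 < (b+1)^2$ for $b \ge 1$, and $c = 2$ fails because $b^2+b+1 = b(b+1)+1$ is always odd, so $c = 3$ is the first genuine candidate, and the Pell machinery shows it already suffices to produce infinitely many bases $b$.
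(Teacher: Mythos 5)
Your proof is correct and follows essentially the same route as the paper: fix $c=3$, complete the square to turn $y^2=3(b^2+b+1)$ into a Pell-type norm equation over $\Zee[\sqrt{3}]$, and multiply a seed solution by the \emph{square} of the fundamental unit so that the parity conditions (ensuring $b$ and $y$ are integers) are preserved. The only cosmetic difference is that you clear the factors of $3$ to land on $U_1^2-3V_1^2=1$, whereas the paper works directly with elements of norm $-3$; your closing observation that $c=1$ and $c=2$ are impossible is a nice (if unnecessary) bonus.
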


\begin{proof}
We want to show there are infinitely many positive integer solutions to
\begin{equation}\label{2,3,1}
y^2 = c(b^2 + b + 1)
\end{equation}
with $1\leq c < b$. 
We show below that there are infinitely many 
integral points on the affine curve defined by 
\begin{equation}
3y^2 = x^2 + x + 1
\label{y3}
\end{equation}
with $x>0$.  Taking such a point, we easily obtain a solution to \eqref{2,3,1} with $c=3$, namely $(3y)^2 = 3(x^2+x+1).$

We rewrite \eqref{y3} as a norm equation in the real 
quadratic field $\mathbb{Q}(\sqrt{3})$.  In particular, rearranging terms yields
\[
(2x+1)^2 - 12y^2 = -3,
\] 
which is equivalent to $N((2x+1) + 2y\sqrt{3})=-3$, where $N$ is the 
norm from $\Que(\sqrt{3})$ to $\Que$.  Running this process in reverse, if $\alpha \in \mathbb{Q}(\sqrt{3})$ has norm $-3$ and can be written in the form $\alpha=a+b\sqrt{3}$ for positive integers $a,b$ with $a$ odd and $b$ even, then $x=(a-1)/2$, $y=b/2$ gives an integer point on $3y^2 = x^2+x+1$.

The unit group of $\mathbb{Z}[\sqrt{3}]$ (which is the ring of integers of $\Que(\sqrt{3})$) is 
generated by $-1$ and the fundamental unit $u=2-\sqrt{3}$, which has $N(u)=1$.  If $\alpha$ is any element of the desired form (e.g., $\alpha = 1 + 2\sqrt{3}$), then $\alpha u^{2k} = a_k + b_k\sqrt{3}$ will also have norm $-3$.  Moreover,   \[u^2=7-4\sqrt{3}\equiv 1\pmod{2\Zee[\sqrt{3}]},\] 
so that $\alpha u^{2k} \equiv \alpha \pmod{2\mathbb{Z}[\sqrt{3}]}$.  Thus, $a_k$ is odd and $b_k$ is even for every $k\in\mathbb{Z}$.  This gives infinitely many integer solutions to Eq.~\eqref{y3}, which, multiplying $a_k$ and $b_k$ by $-1$ if necessary, we may assume to have $x>0$.

\end{proof}
\begin{remark}
A similar class of solutions can be found for any $c>0$ for which the real quadratic field $\mathbb{Q}(\sqrt{c})$ has an integral element of norm $-3$.  Another such field is $\Que(\sqrt{7})$, for which the first associated solution is $(49^2)_{18} = [7,7,7]$.
\end{remark}

\subsection{The case $(q,n,\ell) = (2,3,2)$}

\begin{theorem}
There are infinitely many solutions to the case $(q,n,\ell) = (2,3,2)$.
\end{theorem}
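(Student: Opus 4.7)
The plan is to write the equation as $y^2 = c(b^2+b+1)(b^2-b+1)$ with $b \leq c < b^2$ and to construct infinitely many solutions by pairing the Pell family from the $(2,3,1)$ case (which controls $b^2+b+1$) with a congruence condition that forces $49 \mid b^2-b+1$, so that $b^4+b^2+1$ acquires enough square content for a valid $c$ in the allowed range.

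First I would recall from the $(2,3,1)$ argument that $(2b+1)^2 - 12 s^2 = -3$ has infinitely many positive integer solutions, arising from the linear recurrence $u_{n+2} = 14 u_{n+1} - u_n$ with $u_n = 2 b_n + 1$, $u_0 = 3$, $u_1 = 45$; this gives $b_n = 1, 22, 313, 4366, \ldots$, each satisfying $b_n^2 + b_n + 1 = 3 s_n^2$. Next I would note that $t^2 - t + 1 \equiv 0 \pmod{49}$ has exactly the two solutions $t \equiv 19, 31 \pmod{49}$ (Hensel lifts of the roots $3, 5$ modulo $7$), and that $b_2 = 313 \equiv 19 \pmod{49}$, so $49 \mid b_2^2 - b_2 + 1 = 97657 = 49 \cdot 1993$. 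The recurrence $u_{n+2} = 14 u_{n+1} - u_n$ is invertible modulo $49$, so the sequence $(b_n \bmod 49)$ is purely periodic; the single witness at $n = 2$ therefore guarantees infinitely many indices $n$ with $b_n \equiv 19 \pmod{49}$, hence with $49 \mid b_n^2 - b_n + 1$.

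For any such $b$, write $b^2 + b + 1 = 3 s^2$ and $b^2 - b + 1 = 49 M$ for positive integers $s, M$, and set $c = 3M$ and $y = 21 s M$. Then
\[ c(b^4 + b^2 + 1) = 3M \cdot 3 s^2 \cdot 49 M = 441\, s^2 M^2 = y^2, \]
as required. The constraint $c < b^2$ is immediate from $c = 3(b^2 - b + 1)/49 < 3 b^2 / 49 < b^2$, while $c \geq b$ reduces to $3 b^2 - 52 b + 3 \geq 0$, which holds for every $b \geq 18$ and hence for every Pell solution $b_n$ with $n \geq 2$. (For the concrete witness $b = 313$ one gets $c = 5979$ and $y = 21 \cdot 181 \cdot 1993$.) This produces infinitely many valid triples $(b, c, y)$.

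The main obstacle is establishing that infinitely many Pell solutions satisfy the mod-$49$ congruence; I handle this purely by periodicity of the recurrence modulo $49$ combined with the explicit witness $b_2 = 313$, with no deeper analytic input required. Had the search at $p = 7$ produced no Pell solution in the right residue class, the same scheme would need to be repeated for another prime $p \equiv 1 \pmod 6$ and its square $p^2$, but $p = 7$ already works, and the choice $49$ is what makes $3(b^2-b+1)/49$ fit inside the window $[b, b^2)$.
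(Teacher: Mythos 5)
Your proposal is correct and follows essentially the same route as the paper: the same factorization $b^4+b^2+1=(b^2+b+1)(b^2-b+1)$, the same choice $c=\tfrac{3}{49}(b^2-b+1)$, the same residue classes $19,31 \pmod{49}$, and even the same witness $b=313$ (the paper's $\alpha=627+362\sqrt{3}$). The only difference is packaging: you get infinitude from pure periodicity of the recurrence $u_{n+2}=14u_{n+1}-u_n$ modulo $49$, whereas the paper multiplies by powers of the fundamental unit of $\Zee[\sqrt{3}]$ and notes $u^{56}\equiv 1 \pmod{98\Zee[\sqrt{3}]}$ --- the same mechanism in different language.
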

\begin{proof}
We would like to find solutions to 
\begin{equation}\label{2,3,2}
y^2 = c(b^4+b^2+1)
\end{equation}
in positive integers $b,y,c$ such that $b\leq c < b^2$. Without loss of generality, any integer 
solution can be replaced by a positive integer solution. 

Notice that $x^4+x^2+1 = (x^2+x+1)(x^2-x+1)$, and suppose that $(x,y)$ is a integral point on the curve $3y^2=x^2+x+1$ 
such that $x^2-x+1$ is divisible by 49.  
Let $c=\frac{3}{49}(x^2-x+1)$, so that $3c(x^2-x+1)$ is a square. We compute
\[
\left(\sqrt{3c(x^2-x+1)}y\right)^2 = 3c(x^2-x+1)y^2 = c(x^2-x+1)(x^2+x+1) = c(x^4+x^2+1),
\]
which gives a solution to Eq.~\eqref{2,3,2} with $b=x$, as long as $x\leq c<x^2$; this inequality holds provided that $x\geq 18$.  We now produce infinitely many integral points on the curve $3y^2=x^2+x+1$ such that $49\mid x^2-x+1$, so this inequality is not an issue.  

As in the (2,3,1) case, we rewrite $3y^2 = x^2 + x + 1$ 
as the norm equation \[N((2x+1) + 2y\sqrt{3})=-3,\] where $N$ is the norm from $\Que(\sqrt{3})$ to $\Que$. 
Again as in the (2,3,1) case, if $\alpha\in\mathbb{Q}(\sqrt{3}$ has norm $-3$ and can be written as $\alpha=a+b\sqrt{3}$ for positive integers $a,b$ with $a$ odd and $b$ even, then $x=(a-1)/2$, $y=b/2$ gives 
an integer point on $3y^2 = x^2+x+1$. Observe that the polynomial $x^2-x+1$ will be divisible by $49$ if and only if either 
$x\equiv 19$ or $x\equiv 31\pmod{49}$. If $a=2x+1$, this occurs if and only if 
either $a\equiv 39$ or $a\equiv 63\pmod{98}$.

Observe that $\alpha=627 + 362\sqrt{3}$ satisfies the desired congruence 
conditions.  Let $u=2-\sqrt{3}$ be the fundamental unit of $\Zee[\sqrt{3}]$.  As $u$ is a unit, some power $u^r$ will necessarily be congruent to $1 \pmod{98 \mathbb{Z}[\sqrt{3}]}$; an explicit computation shows that $r=56$ works.  Thus, setting $\alpha u^{56k} = a_k + b_k \sqrt{3}$, for every $k\in \mathbb{Z}$ we have $a_k \equiv 39 \pmod{98}$ and that $b_k$ is even.  This produces the infinitely many solutions to Eq.~\eqref{2,3,2}.  The next one, with $k=1$, is
\begin{multline*}
(b,y,c)=(33519770429365238471302383574583401, \\
19352648480568478024495121554106701, \\
68790306712490710007811612444611710421528067927390557506093905927147).
\end{multline*}


\end{proof}

\subsection{The case $(q,n,\ell) = (3,2,2)$}

\begin{theorem}
There are infinitely many solutions to the case $(q,n,\ell)=(3,2,2)$.
\end{theorem}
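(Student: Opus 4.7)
The equation for $(q,n,\ell)=(3,2,2)$ unpacks as
$$y^3 \;=\; c(b^2+1), \qquad b \leq c < b^2,$$
so the task is to exhibit infinitely many bases $b\geq 2$ for which such $(y,c)$ exist. My plan is to produce an explicit one-parameter family by forcing $b^2+1$ into a shape that manufactures the cube structure for free, in direct analogy with the norm-equation technique used for $(2,3,1)$ and $(2,3,2)$.

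First, I would restrict to bases $b$ with $b^2+1=2m^2$; this is the negative Pell equation $b^2-2m^2=-1$. It says exactly that $b+m\sqrt{2}\in\Zee[\sqrt{2}]$ has norm $-1$, and since the fundamental unit $1+\sqrt{2}$ of $\Zee[\sqrt{2}]$ already has norm $-1$, all positive solutions are generated by $b+m\sqrt{2}=(1+\sqrt{2})^{2k+1}$ for $k\geq 0$, yielding the infinite sequence $(b,m)=(1,1),(7,5),(41,29),(239,169),\ldots$ In particular there are infinitely many eligible $b$.

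Second, for each such pair $(b,m)$ I would set
$$y=2m, \qquad c=4m.$$
Then $c(b^2+1)=4m\cdot 2m^2=8m^3=(2m)^3=y^3$, so the Diophantine equation holds automatically. It only remains to verify the digit-length condition $b\leq c< b^2$. From $m=\sqrt{(b^2+1)/2}$ one has $c=4m=2\sqrt{2(b^2+1)}>2\sqrt{2}\,b>b$, which is free, and $c=4m<4b\leq b^2$ as soon as $b\geq 4$. Every Pell solution beyond the trivial $(1,1)$ already has $b\geq 7$, so both inequalities are immediate, and the construction furnishes infinitely many valid triples $(b,y,c)$.

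There is no genuine obstacle here; the one creative move is the decision to impose $b^2+1=2m^2$. Once that constraint is in place, the identity $4m\cdot 2m^2=(2m)^3$ is forced and everything else is routine. The analogy with the earlier cases is close: where $(2,3,1)$ and $(2,3,2)$ exploited units of $\Que(\sqrt{3})$ to solve a norm-$-3$ equation, here units of $\Que(\sqrt{2})$ solve the norm-$-1$ equation $b^2-2m^2=-1$, and the shape of the target cube does the rest.
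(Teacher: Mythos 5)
Your proposal is correct and is essentially the paper's own argument: both reduce to the negative Pell equation $x^2-2y^2=-1$ (infinitely many solutions via odd powers of $1+\sqrt{2}$) and then take $c=4y$ so that $c(b^2+1)=4y\cdot 2y^2=(2y)^3$, with the same routine check that $b\leq c<b^2$. No gaps.
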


\begin{proof}
The equation we want to solve in positive integers is
\begin{equation}\label{3,2,2}
y^3 = c(b^2+1)
\end{equation}
for $b^2\leq c < b^3$. 

We begin by showing that there are infinitely many integral points $(x,y)$ on the curve
\begin{equation}\label{3,2,2 curve}
2y^2 = x^2 + 1,
\end{equation}
where without loss of generality, both $x$ and $y$ are positive. Starting with such a point $(x,y)$ and 
rearranging Eq.~\eqref{3,2,2 curve}, we have $(2y)^3=4y(x^2+1)$, which gives a solution 
to Eq.~\eqref{3,2,2} with $b=x$ and $c=4y$. As $y=\sqrt{(x^2+1)/2}$, certainly $c\geq x$. The upper bound 
$c\leq x^2-1$ is equivalent to $2\sqrt{2(x^2+1)}\leq x^2-1$, which can be verified to hold for all $x\geq 4$, 
so all but finitely many of the integral points we find will produce solutions with $c$ in the correct range.

Eq.~\eqref{3,2,2 curve} is easily seen to be equivalent to the norm equation
\[
N(x+y\sqrt{2})=-1
\]
where $N$ is the norm from $\Que(\sqrt{2})$ to $\Que$. 
Let $u=1+\sqrt{2}$ be the fundamental unit of $\Que(\sqrt{2})$. Note that $N(u^k)=-1$ for 
all odd integers $k$. If we let $u^k=a_k+b_k\sqrt{2}$ for integers $a_k,b_k$, then the 
point $(a_k,b_k)$ is an integral point on Eq.~\eqref{3,2,2 curve}. So we have an infinite family of 
such points, and thus infinitely many solutions to Eq.~\eqref{3,2,2}.
\end{proof}

\subsection{The case $(q,n,\ell) = (3,3,1)$}

\begin{theorem}
There are infinitely many solutions to the case $(q,n,\ell) = (3,3,1)$.
\end{theorem}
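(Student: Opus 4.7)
The plan is to reduce the problem to finding infinitely many integer solutions of the auxiliary Pell-type equation
\[
b^2 + b + 1 = 7 y^2 \quad \text{with the extra condition} \quad 7 \mid y.
\]
Any such pair immediately yields a $(3,3,1)$ solution: setting $c := y/7$ gives $y^3 = (y/7)\cdot 7y^2 = c(b^2+b+1)$, and from $7y^2 = b^2+b+1 < 2b^2$ for $b \geq 2$ we obtain $y < b$, hence $c = y/7 < b$. The pair $(b,y) = (18,7)$ is already such a solution, corresponding to the identity $7^3 = 1\cdot(18^2+18+1)$.

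Following the strategy of the $(2,3,1)$ case, I would recast the auxiliary equation as a norm equation in $\mathbb{Z}[\sqrt{7}]$. Setting $X = 2b+1$ and $Y = 2y$, it becomes $X^2 - 7Y^2 = -3$, i.e., $N(X+Y\sqrt{7}) = -3$ for the norm from $\mathbb{Q}(\sqrt{7})$ to $\mathbb{Q}$. Solutions of interest are those with $X$ odd and $14 \mid Y$ (this divisibility encodes both $Y$ being even, so that $y \in \mathbb{Z}$, and $7 \mid y$). The element $\alpha := 37 + 14\sqrt{7}$ has norm $-3$ and corresponds to $(b,y) = (18,7)$.

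To produce an infinite family, I would let $u := 8 + 3\sqrt{7}$ be the fundamental unit of $\mathbb{Z}[\sqrt{7}]$ (of norm $+1$) and consider $\alpha_\ell := \alpha \cdot u^{14\ell}$ for $\ell \geq 0$; each has norm $-3$. The key claim is that
\[
u^{14} \equiv 1 \pmod{14\,\mathbb{Z}[\sqrt{7}]},
\]
which forces $\alpha_\ell \equiv \alpha \pmod{14\,\mathbb{Z}[\sqrt{7}]}$ and hence $X_\ell$ odd with $14 \mid Y_\ell$ for every $\ell$. Modulo $2$, this follows from $u^2 = 127 + 48\sqrt{7} \equiv 1$, so $u^{14} \equiv 1 \pmod 2$. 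Modulo $7$, working in the quotient $\mathbb{Z}[\sqrt{7}]/7 \cong \mathbb{F}_7[\epsilon]/(\epsilon^2)$ (with $\epsilon$ the image of $\sqrt{7}$, satisfying $\epsilon^2 = 7 \equiv 0$), the unit $u$ maps to $1 + 3\epsilon$, and the binomial expansion gives $(1+3\epsilon)^7 = 1 + 21\epsilon \equiv 1$ (higher-order terms vanishing), so $u^7 \equiv 1$ and hence $u^{14} \equiv 1$ modulo $7$. CRT then combines the two.

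Setting $b_\ell := (X_\ell - 1)/2$, $y_\ell := Y_\ell/2$, and $c_\ell := y_\ell/7$ produces infinitely many triples of positive integers $(b_\ell, y_\ell, c_\ell)$ with $y_\ell^3 = c_\ell(b_\ell^2 + b_\ell + 1)$ and $c_\ell < b_\ell$; distinctness of the $b_\ell$ is clear from the exponential growth of $u^{14\ell}$. The only nontrivial step is the modular claim $u^{14} \equiv 1 \pmod{14\,\mathbb{Z}[\sqrt{7}]}$, but as sketched it is a short elementary computation, so no real obstacle is anticipated.
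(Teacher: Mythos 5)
Your proof is correct and follows essentially the same route as the paper: both reduce to the norm equation $N(X+Y\sqrt{7})=-3$ with $X$ odd and $14\mid Y$, seed it with the element $37+14\sqrt{7}$ corresponding to $(b,y)=(18,7)$, and propagate by powers of the fundamental unit via $u^{14}\equiv 1 \pmod{14\,\mathbb{Z}[\sqrt{7}]}$. (Incidentally, your seed is the right one: the paper's printed $\alpha=37+98\sqrt{7}$ has norm $37^2-7\cdot 98^2=-65859$, not $-3$, and appears to be a typo for $37+14\sqrt{7}$.)
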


\begin{proof}
We show that the equation 
\begin{equation}\label{(3,3,1) curve}
343y^2 = x^2+x+1
\end{equation}
has infinitely many solutions in integers with $x>y>0$. 
It follows that the equation 
\begin{equation}\label{3,3,1}
(7y)^3 =c(b^2+b+1)
\end{equation}
has infinitely many solutions with $c=y$, $b=x$, and $1\leq c<b$.

Completing the square on the RHS of Eq.~\eqref{(3,3,1) curve}, multiplying both sides by 4, and
rearranging, we obtain the equivalent equation 
\[
((2x+1)^2-(14y)^2(7))=-3.
\]
We write this as $N((2x+1) + 14y\sqrt{7}) = -3$, where $N$ is 
the norm from $\Que(\sqrt{7})$ to $\Que$. Let $\alpha=a+b\sqrt{7}$ with positive integers $a,b$. 
If $N(\alpha)=-3$, $a$ is odd, and $b$ is divisible by 14, then $x=(a-1)/2$ and $y=b/14$ yield 
a solution to Eq.~\eqref{(3,3,1) curve}. 

As in the previous theorems, we start with a single element with the desired properties, in this case $\alpha=37+98\sqrt{7}$, and use the unit group to produce infinitely many.  The fundamental unit 
of $\Que(\sqrt{7})$ is $u=8-3\sqrt{7}$, which satisfies $u^{14} \equiv 1 \pmod{14 \mathbb{Z}[\sqrt{7}]}$.  Thus, any of the elements $\alpha u^{14k}$ will be of the desired form, and there are infinitely many solutions to Eq.~\eqref{3,3,1} as well.

\end{proof}

\subsection{The case $(q,n,\ell) = (3,2,3)$}

The solutions we found to the $(3,3,1)$ case also produce solutions to
the $(3,2,3)$ case by a straightforward algebraic manipulation. Recall that for 
the $(3,2,3)$ case, the equation to solve is 
\begin{equation}\label{3,2,3}
y^3 = c(b^3+1)
\end{equation}
with $b^2\leq c<b^3$.

\begin{theorem}
There are infinitely many solutions to the case $(q,n,\ell) = (3,2,3)$.
\end{theorem}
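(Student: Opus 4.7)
The plan is to convert the infinite family of solutions from the $(3,3,1)$ case into solutions for $(3,2,3)$ by exploiting the factorization $b^3+1 = (b+1)(b^2-b+1)$. The key observation is that the polynomial $x^2+x+1$ that appeared in the $(3,3,1)$ analysis transforms into $b^2-b+1$ under the substitution $x = b-1$, so the nontrivial quadratic factor of $b^3+1$ is produced for free.

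Concretely, let $(x,y)$ be an integer solution to $343y^2 = x^2+x+1$ with $x,y>0$, of which there are infinitely many by the previous theorem. Setting $b := x+1$ gives $343y^2 = b^2-b+1$, and multiplying through by $(b+1)$ yields
\[
343y^2(b+1) = (b+1)(b^2-b+1) = b^3+1.
\]
Defining $Y := 7y(b+1)$ and $c := y(b+1)^2$, a direct manipulation gives
\[
Y^3 = 343y^3(b+1)^3 = y(b+1)^2 \cdot \bigl[343y^2(b+1)\bigr] = c(b^3+1),
\]
so $(Y,b,c)$ satisfies Eq.~\eqref{3,2,3}.

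It remains to verify the size constraint $b^2 \leq c < b^3$. Since $y \geq 1$, we have $c = y(b+1)^2 \geq (b+1)^2 > b^2$ automatically. For the upper bound, $y = \sqrt{(b^2-b+1)/343} \sim b/(7\sqrt{7})$, so $c = y(b+1)^2 \sim b^3/(7\sqrt{7})$, which is strictly less than $b^3$ for all sufficiently large $b$; a short elementary check handles the finite exceptional range. Since the $(3,3,1)$ family produces solutions with $b = x+1 \to \infty$, all but finitely many yield valid $(3,2,3)$ solutions. The only nontrivial step is recognizing the substitution $x = b-1$; once that is in hand, the rest of the argument is routine algebra together with an asymptotic inequality check.
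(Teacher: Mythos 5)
Your proposal is correct and is essentially the paper's own argument: writing $b=x+1$ and multiplying through by $(b+1)$ is exactly the paper's substitution $b'=b+1$, $y'=y(b+2)$, $c'=c(b+2)^2$ applied to the $(3,3,1)$ family, both resting on the identity $(x+1)^3+1=(x+2)(x^2+x+1)$. The only cosmetic difference is that the paper verifies the range $b^2\leq c<b^3$ exactly for every $(3,3,1)$ solution, while you check it asymptotically, which still suffices for infinitude.
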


\begin{proof}
Let $(y,b,c)$ be a solution to the case $(q,n,\ell) = (3,3,1)$.  Then
$y^3 = c(b^2 + b + 1)$ for some integer $c$ satisfying
$1 \leq c < b$. Set $b'=b+1$, $y'=y(b+2)$, and $c'=c(b+2)^2$. We claim 
that $(y',b',c')$ is a solution to Eq.~\eqref{3,2,3}. We compute
\begin{align*}
(y')^3 = y^3(b+2)^3 = c(b^2+b+1)(b+2)^3 = c(b+2)^2((b+1)^3+1) = c'((b')^3+1),
\end{align*}
as claimed. The only thing left to check is that $c'$ is in the correct range $(b')^2\leq c'< (b')^3$. 

As $1\leq c\leq b-1$ by assumption, we have 
\[
(b+2)^2\leq c(b+2)^2\leq (b-1)(b+2)^2.
\] 
So $c'\geq (b+2)^2\geq (b+1)^2=(b')^2$, 
and the lower bound on $c'$ is satisfied. For the upper bound, we directly compute 
\[
(b')^3 -(b-1)(b+2)^2= (b+1)^3 -(b-1)(b+2)^2 = 3b+5,
\]
and $3b+5 > 0$ for all $b$ under consideration. So 
$(b-1)(b+2)^2 < (b')^3$; thus $c'<(b')^3$, and $c'$ is in the correct range.

We have shown there are infinitely many solutions to the $(3,3,1)$ case, so 
it follows that there are infinitely many solutions to the $(3,2,3)$ case. Note that 
$w=(c,2c,c,2c,c,2c)$ in base $b'=b+1$.
\end{proof}

\subsection{The case $(q,n,\ell) = (2,4,1)$}

Here we will show that the equation
$$ (y^2)_b = w\tothe 4 $$
has solutions for infinitely many bases $b$.  

\begin{theorem}
There are infinitely many solutions to the case $(q,n,\ell)=(2,4,1)$.
\end{theorem}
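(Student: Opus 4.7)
The equation for $(q,n,\ell)=(2,4,1)$ can be rewritten as
\[
y^2 = c(b^3+b^2+b+1) = c(b+1)(b^2+1),\qquad 1\le c\le b-1.
\]
The plan is to specialize $b$ so that $b^2+1$ takes a convenient form, and then choose $c$ to absorb the remaining non-square factors in $c(b+1)(b^2+1)$. The cleanest such specialization is to demand $b^2+1 = 2d^2$, i.e., that $(b,d)$ is a positive integer solution to the negative Pell equation $b^2 - 2d^2 = -1$. This equation has infinitely many positive solutions, arising from the odd powers of the fundamental unit $u = 1+\sqrt{2}$ of $\Zee[\sqrt{2}]$ (which satisfies $N(u) = -1$): writing $u^{2k+1} = b_k + d_k\sqrt{2}$ generates the sequence $b_k \in \{1, 7, 41, 239, 1393,\ldots\}$, each satisfying $b_k^2+1 = 2d_k^2$.

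Fix such a Pell pair $(b,d)$ with $b\ge 7$. The equation to solve becomes $y^2 = 2cd^2(b+1)$, so it suffices to choose $c$ so that $2c(b+1)$ is a perfect square. I will take $c$ to be the squarefree part of $2(b+1)$, and write $2(b+1) = cs^2$ for a positive integer $s$. Setting $y = csd$, a one-line computation gives
\[
y^2 = c^2 s^2 d^2 = c\cdot 2(b+1)\cdot d^2 = c(b+1)(b^2+1),
\]
which is exactly the identity we need.

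The main obstacle is verifying the range condition $1\le c\le b-1$. The Pell relation $b^2 = 2d^2-1$ forces $b$ to be odd, hence $b+1$ is even and $4\mid 2(b+1)$; consequently the squarefree part satisfies $c \le 2(b+1)/4 = (b+1)/2$, which is at most $b-1$ for every $b\ge 3$. This covers all positive Pell solutions past the trivial $b=1$, so we obtain infinitely many distinct bases $b$ for which $(y^2)_b = w\tothe 4$ holds with $|w|=1$, completing the argument. (As a sanity check, $b=7$ yields $c=1$, $s=4$, $d=5$, $y=20$, corresponding to $(20^2)_7 = 1111$; $b=41$ yields $c=21$, $s=2$, $d=29$, $y=1218$, corresponding to $(1218^2)_{41} = (21,21,21,21)$.)
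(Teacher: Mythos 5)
Your proof is correct and follows essentially the same route as the paper: both specialize $b$ to solutions of the negative Pell equation $b^2+1=2d^2$ (odd powers of the unit $1+\sqrt{2}$, i.e., integral points on $2y^2=x^2+1$) and then choose $c$ so that $2c(b+1)$ becomes a perfect square. The only difference is cosmetic: the paper takes $c=(b+1)/2$ uniformly, so that $2c(b+1)=(b+1)^2$, while you take $c$ to be the squarefree part of $2(b+1)$, which produces the same infinite family of bases with a possibly smaller digit $c$.
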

\begin{proof}
The $(2,4,1)$ case requires solving the equation
\begin{equation}\label{2,4,1}
y^2 = c(b^3+b^2+b+1)
\end{equation}
for $1\leq c<b$. As in the (3,2,2) case, we use the
infinitely many integral points on the curve
\begin{equation}\label{2,4,1 curve}
2y^2 = x^2 + 1.
\end{equation}
Notice that any such $x$ is odd, and that we may assume that $x,y>0$ without loss of generality. 

Setting $b=x$ and $c=\frac{1}{2}(x+1)$, and multiplying both sides 
of Eq.~\eqref{2,4,1 curve} by $\frac{1}{2}(x+1)^2$, we obtain
\[
(y(x+1))^2 = \frac{1}{2}(x+1)(x+1)(x^2+1) = c(b^3+b^2+b+1),
\]
which gives a solution to Eq.~\eqref{2,4,1}. If $x>1$, we have $1\leq c < x$, as required.
\end{proof}

\subsection{The case $(q,n,\ell) = (4,2,2)$}

\begin{theorem}
There are infinitely many solutions to the $(q,n,\ell)=(4,2,2)$ case.
\end{theorem}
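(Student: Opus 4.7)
The case $(q,n,\ell)=(4,2,2)$ requires infinitely many solutions to $y^4 = c(b^2+1)$ with $b \leq c < b^2$. My plan mimics the $(3,2,2)$ and $(2,4,1)$ cases: reduce to a norm equation in $\mathbb{Q}(\sqrt{5})$ and use the unit group to produce the family. The pivotal reduction is this: any positive-integer pair $(b,e)$ with
\[
b^2 + 1 = 125\,e^2
\]
gives a solution via $y = 5e$ and $c = 5e^2$, since then $y^4 = 625\,e^4 = (5e^2)(125\,e^2) = c(b^2+1)$. The range condition $c < b^2$ becomes $5e^2 < 125e^2 - 1$, which is immediate, and $c \geq b$ rearranges to $25\,e^4 \geq 125\,e^2 - 1$, which holds as soon as $e \geq 3$.

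To build such pairs, let $u = 2 + \sqrt{5}$ be the fundamental unit of $\mathbb{Z}[\sqrt{5}]$ (with $N(u)=-1$) and set $\alpha = u^5 = 682 + 305\sqrt{5}$. Then $N(\alpha) = -1$ and, crucially, the coefficient $305 = 5\cdot 61$ is divisible by $5$. A direct computation in $\mathbb{Z}[\sqrt 5]/5\mathbb{Z}[\sqrt 5]$ shows $u^{20} \equiv 1 \pmod{5\mathbb{Z}[\sqrt{5}]}$, so each element $\alpha u^{20k} = b_k + d_k\sqrt{5}$ ($k \geq 0$) has norm $-1$ (yielding $b_k^2 + 1 = 5 d_k^2$) and satisfies $d_k \equiv 305 \equiv 0 \pmod{5}$. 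Writing $d_k = 5 e_k$ produces the desired family $b_k^2 + 1 = 125\,e_k^2$, with $e_0 = 61$ and $e_k \to \infty$; hence infinitely many solutions.

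The main obstacle is recognizing that the bare Pell equation $b^2 - 5d^2 = -1$ does not suffice: without an extra congruence $5 \mid d$, the product $c(b^2+1) = 5\,c\,d^2$ carries only a single factor of $5$, which cannot complete to a fourth power. Choosing $\alpha = u^5$ (rather than $u$ itself) injects a further factor of $5$ into $d$ via the coefficient $305$, so that $b^2+1 = 5^3 e^2$; the matching choice $c = 5e^2$ then supplies exactly the factors needed to complete $(5e)^4$. Verifying $u^{20} \equiv 1 \pmod{5\mathbb{Z}[\sqrt 5]}$ is the one routine calculation that must be carried out explicitly, after which the rest of the proof is just arithmetic.
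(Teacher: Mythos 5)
Your proof is correct. I verified the key computations: $u=2+\sqrt{5}$ is a unit of $\Zee[\sqrt{5}]$ of norm $-1$, $u^5=682+305\sqrt{5}$ with $305=5\cdot 61$, $u^5\equiv 2\pmod{5\Zee[\sqrt{5}]}$ so that $u^{20}\equiv 1$, and for $b^2+1=125e^2$ the choices $y=5e$, $c=5e^2$ give $y^4=625e^4=c(b^2+1)$ with $b\leq c<b^2$ once $e\geq 3$. (Reassuringly, your first solution $b=682$, $y=305$, $c=18605=[27,191]_{682}$ appears in the paper's own table for this case.)

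The route differs from the paper's in its arithmetic details, though the template is the same. The paper works in $\Que(\sqrt{2})$: it takes integral points on $2y^2=x^2+1$, imposes the congruence $13\mid y$ by using the powers $u^{14k+7}$ of $u=1+\sqrt{2}$ (since $u^7=239+169\sqrt{2}$), and sets $c=2^3\cdot 3^4\cdot 13^{-4}y^2$ so that $(6y/13)^4=c(x^2+1)$. You instead work in $\Que(\sqrt{5})$ with the Pell equation $b^2-5d^2=-1$ and force $5\mid d$ via $\alpha=u^5$; the resulting $c=5e^2$ is cleaner and the completion to a fourth power is more transparent, since the auxiliary prime $5$ is already the discriminantal prime of the field rather than an extraneous one like $13$. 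Both arguments require exactly one explicit congruence computation with a power of the fundamental unit, and both yield the range condition $b\leq c<b^2$ for all but finitely many members of the family, so the two proofs are of essentially equal length and difficulty.
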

\begin{proof}
The key equation to solve for the $(4,2,2)$ case is
\begin{equation}\label{4,2,2}
y^4 = c(b^2+1)
\end{equation}
for $b\leq c<b^2$. We begin as in the $(3,2,2)$ case by finding infinitely many integral points 
on the curve
\begin{equation}\label{4,2,2 curve}
2y^2 = x^2+1,
\end{equation}
but now also insisting that $y$ be divisible by $13$. 
Assuming this is possible for the moment, set $b=x$ and 
$c= 2^3 \cdot 3^4 \cdot13^{-4} \cdot y^2=
2^3 \cdot 3^4 \cdot13^{-4} \cdot (x^2+1)$, and
note that $c$ is an integer. 
Clearly $c\leq x^2-1$; on the other hand, $c\geq x$ holds for all $x\geq 89$ 
and thus for all but finitely many of the integral solutions to Eq.~\eqref{4,2,2 curve}. 
Multiplying through by $(6y/13)^2$, we have
\[
2\left(\frac{6y}{13}\right)^4 = \frac{36}{169}y^2(x^2+1)=2c(x^2+1),
\]
so there are infinitely many solutions to Eq.~\eqref{4,2,2}.

It remains to demonstrate the existence of an infinite family of integral points $(x,y)$ on 
the curve Eq.~\eqref{4,2,2 curve} such that $13\mid y$. As in the $(3,2,2)$ case, the equation 
defining the curve can be rewritten $N(x+y\sqrt{2}) = -1$ where $N$ is the norm from 
$\Que(\sqrt{2})$ to $\Que$. Let $u=1+\sqrt{2}$ be the fundamental unit in $\Que(\sqrt{2})$. 
We compute $u^7 = 239+169\sqrt{2}$. Writing $u^{7k} = a_k + b_k\sqrt{2}$ for integers $a_k,b_k$, 
it is easy to see that $13\mid b_k$ for all $k\geq 1$, and $N(u^{7k})=-1$ for all odd $k$. So the family 
$u^{14k+7}$ gives an infinite supply of points of the desired form.
\end{proof}

We have now considered all admissible triples, and 
this completes the proof of Theorem~\ref{main-thm}.   \hfill $\qed$

\section{Solutions for inadmissible triples}

As we have seen above, the $abc$ conjecture implies that there are only
a finite number of solutions, in toto, corresponding to all inadmissible
triples, to the equation $(y^q)_b = w\tothe n$ with $|w| = \ell$.
So far we have found $8$ such solutions, and they are given
below in Table~\ref{inadmiss}.

\begin{table}[H]
\begin{center}
\begin{tabular}{ccclll}
$q$ & $n$ & $\ell$ & $b$ & $y$ & $w$  \\
\hline
2 & 5 & 1 & 3 & 11 & (1) \\
4 & 3 & 1 & 18 & 7 & (7) \\
4 & 2 & 3 & 19 & 70 & (9,13,4) \\
4 & 2 & 3 & 23 & 78 & (5,17,6) \\
5 & 2 & 2 & 239 & 52 & (27,203) \\
5 & 2 & 2 & 239 & 78 & (211,115) \\
6 & 2 & 2 & 239 & 26 & (22,150)  \\
3 & 2 & 4 & 12400 & 57459558593 & (4208, 7128, 8441, 5457) \\
\end{tabular}
\end{center}
\label{inadmiss}
\end{table}

We searched various ranges for other solutions and our search results
are summarized below.  

\def\bndd{486800}
\def\bnde{{10^7}}
\def\bndf{3764000}

\begin{center}
\scalebox{0.8}{
\begin{tabular}{cccc}
$q$ & $n$ & $\ell$ & $b$ \\
\hline
2 & 3 & 3 & none $\leq \bndf$ \\
2 & 3 & 4 & none $\leq \bndd$ \\
2 & 3 & 5 & none $\leq \bndd$ \\
2 & 4 & 2 & none $\leq \bndd$ \\
2 & 4 & 3 & none $\leq \bndd$ \\
2 & 4 & 4 & none $\leq \bndd$ \\
2 & 4 & 5 & none $\leq \bndd$ \\
2 & 5 & 1 & \red{one} $\leq \bnde$ \\
2 & 5 & 2 & none $\leq \bndd$ \\
2 & 5 & 3 & none $\leq \bndd$ \\
2 & 6 & 1 & none $\leq \bndd$ \\
2 & 6 & 2 & none $\leq \bndd$ \\
2 & 6 & 3 & none $\leq \bndd$ \\
3 & 2 & 4 & \red{one} $\leq \bnde$ \\
3 & 2 & 5 & none $\leq \bndd$ \\
3 & 2 & 6 & none $\leq \bndd$ \\
3 & 3 & 2 & none $\leq 5\cdot 10^5$ \\
3 & 3 & 3 & none $\leq \bndf$\\
3 & 3 & 4 & none $\leq \bndd$ \\
3 & 3 & 5 & none $\leq \bndd$ \\
3 & 4 & 1 & none $\leq \bndd$ \\
3 & 4 & 2 & none $\leq \bndd$ \\
3 & 4 & 3 & none $\leq \bndd$ \\
3 & 4 & 4 & none $\leq \bndd$ \\
3 & 4 & 5 & none $\leq \bndd$ \\
3 & 5 & 1 & none $\leq 5 \cdot 10^5$ \\
3 & 5 & 2 & none $\leq \bndd$ \\
3 & 5 & 3 & none $\leq \bndd$ \\
4 & 2 & 3 & \red{two} $\leq \bnde$ \\
4 & 2 & 4 & none $\leq 5 \cdot 10^5$ \\
\end{tabular}
\quad\quad\quad
\begin{tabular}{cccc}
$q$ & $n$ & $\ell$ & $b$ \\
\hline
4 & 2 & 5 & none $\leq \bndd$ \\
4 & 2 & 6 & none $\leq \bndd$ \\
4 & 3 & 1 & \red{one} $\leq \bnde$ \\
4 & 3 & 2 & none $\leq 5 \cdot 10^5$ \\
4 & 3 & 3 & none $\leq \bndf$\\
4 & 3 & 4 & none $\leq \bndd$ \\
4 & 3 & 5 & none $\leq \bndd$ \\
4 & 4 & 1 & none $\leq 5 \cdot 10^5$ \\
4 & 4 & 2 & none $\leq \bndd$ \\
4 & 4 & 3 & none $\leq \bndd$ \\
4 & 4 & 4 & none $\leq \bndd$ \\
4 & 4 & 5 & none $\leq \bndd$ \\
4 & 5 & 1 & none $\leq 5 \cdot 10^5$ \\
4 & 5 & 2 & none $\leq \bndd$ \\
4 & 5 & 3 & none $\leq \bndd$ \\
5 & 2 & 2 & \red{one} $\leq \bnde$ \\
5 & 2 & 3 & none $\leq 5 \cdot 10^5$ \\
5 & 3 & 1 & none $\leq 5 \cdot 10^5$ \\
5 & 3 & 2 & none $\leq 5 \cdot 10^5$ \\
5 & 3 & 3 & none $\leq \bndf$ \\
5 & 4 & 1 & none $\leq \bndd$ \\
5 & 4 & 2 & none $\leq \bndd$ \\
5 & 4 & 3 & none $\leq \bndd$ \\
6 & 2 & 2 & \red{one} $\leq \bnde$\\
6 & 2 & 3 & none $\leq 5 \cdot 10^5$ \\
6 & 3 & 1 & none $\leq \bndd$\\
6 & 3 & 2 & none $\leq \bndd$\\
6 & 3 & 3 & none $\leq \bndf$ \\
6 & 4 & 1 & none $\leq \bndd$ \\
\end{tabular}
}
\end{center}

\subsection{Our search procedure}

Consider Eq.~\eqref{equiva}:  $y^q = c {{b^{n\ell} - 1} \over {b^\ell - 1}}$.
We describe a search procedure to find solutions $(b,y)$ to this equation,
which produced the results above.
It has been
implemented in three different languages:  APL, Maple, and python.  Code
is available from the authors.

Given $(q,n, \ell)$ and $b$,
we start by factoring $ r := (b^{n\ell} - 1)/(b^\ell- 1)$.  This prime
factorization
can be speeded up using the algebraic factorization of the
polynomial $X^{(n-1)\ell} + \cdots + X^\ell + 1$ over $\Que[X]$.  For example,
if $n = 3$ and $\ell = 2$, the polynomial
$X^4 + X^2 + 1$ has the factorization $f(X) \cdot g(X)$ where
$f(X) = X^2 + X + 1$ and $g(X) = X^2 - X + 1$.
We therefore can compute $f(b)$ and $g(b)$ and factor each piece
independently and combine the results.

Now we have the prime factorization of $r$, say
$r = p_1^{e_1} \cdots p_t^{e_t}$,
If $cr$ is to be a $q$th power, then we must have that
$p_i^{q \lceil e_i/q \rceil}$ divides  $cr$ for $1 \leq i \leq t$.
So $c$ must be a multiple of
$$ d := \prod_{1 \leq i \leq t} p_i^{q \lceil e_i/q \rceil - e_i} .$$
and $c$ must further satisfy the inequality $b^{\ell - 1} \leq c < b^\ell$.
Writing $c = k^qd$ for some integer $k$, we have
$(b^\ell -1)/d \leq k^q <  b^\ell/d$ and so
$((b^\ell - 1)/d)^{1/q} \leq k < (b^\ell/d)^{1/q}$.  A solution then exists for
each integer $k$ in this interval, which can be easily checked.

The most time-consuming part of this calculation is the integer
factorization.  Typically we were searching some subrange of
the interval $[2, 10^7]$, with $n \leq 6$ and $\ell \leq 5$.
Thus we could be factoring numbers of size as large as
$10^{175}$.
If we want to perform this computation for many different
triples $(q,n,\ell)$ at once, it makes sense to first precompute the
algebraic factorizations described above, next compute
the factorizations of individual pieces, and finally assemble
the needed factorizations from these pieces.

\section{Beyond canonical base-$b$ representation}

One can consider the equation $(y^q) = w \tothe n$ for a wide variety of
other types of
representations.  In this section we consider two such other types of
representations.

\subsection{Bijective base-$b$ representation}

First, we consider the so-called ``bijective base-$b$ representation'';
see, for example, \cite{Foster:1947}; \cite[\S 9, pp.~34--36]{Smullyan:1961};
\cite[Solution to Exercise 4.1-24, p.\ 495]{Knuth:1969};
\cite[Note 9.1, pp.\ 90--91]{Salomaa:1973};
\cite[pp.\ 70--76]{Davis&Weyuker:1983};
\cite{Forslund:1995}; \cite{Boute:2000}.

This representation is like ordinary base-$b$ representation, except
that instead of using the digits $0,1, \ldots, b-1$, we use the
digits $1, 2, \ldots, b$ instead.  We use the notation
$\langle x \rangle_b$ to denote this representation.

\begin{theorem}
For all $b, \ell \geq 2$ there exists a word $w$ of length $\ell$
and an integer $y$ such that $\langle y^2 \rangle_b = w \tothe 2$.
\label{bij1}
\end{theorem}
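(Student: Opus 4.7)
The plan is to give a completely explicit family of solutions, one for each pair $(b,\ell)$. Specifically, I would take
\[
y = b^\ell + 1, \qquad c = b^\ell + 1,
\]
so that $y^2 = c(b^\ell + 1)$. Letting $w$ denote the bijective base-$b$ representation of $c$, it then remains only to verify that $w$ has length exactly $\ell$ and that $\langle y^2\rangle_b = w\tothe 2$.

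For the length of $w$, I would use the fact that the $\ell$-digit strings in bijective base $b$ (those over the digit alphabet $\{1,\dots,b\}$) represent precisely the integers in the interval
\[
\left[\frac{b^\ell-1}{b-1},\ \frac{b^{\ell+1}-b}{b-1}\right],
\]
obtained by taking the values of $11\cdots 1$ and $bb\cdots b$. So I would check that $b^\ell + 1$ lies in this interval for all $b,\ell\ge 2$. The lower bound $b^\ell+1 \ge (b^\ell-1)/(b-1)$ is equivalent to $b + b^{1-\ell}\ge 2$, which holds trivially for $b\ge 2$. The upper bound $b^\ell+1\le (b^{\ell+1}-b)/(b-1)$ is equivalent to $2b-1 \le b^\ell$, which holds because $2b-1 \le b^2 \le b^\ell$ for $b\ge 2$ and $\ell\ge 2$.

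For the equality $\langle y^2\rangle_b = w\tothe 2$, I would invoke uniqueness of bijective base-$b$ representations. Since $w$ is an $\ell$-digit bijective string with value $c$, the concatenation $ww$ is a valid bijective string of length $2\ell$ whose value is $c\cdot b^\ell + c = c(b^\ell+1) = y^2$. By uniqueness, $ww$ must be the bijective base-$b$ representation of $y^2$, as desired.

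There is no real obstacle here; the construction is deliberately rigid and both verifications amount to one-line inequalities. The only mildly delicate point is remembering that uniqueness of bijective base-$b$ representation is what lets one read off $\langle y^2\rangle_b$ from the arithmetic identity $y^2 = c\cdot b^\ell + c$ without any further digit-by-digit argument.
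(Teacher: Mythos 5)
Your construction is correct and is essentially the paper's: both take $y=b^\ell+1$ and exploit the identity $y^2=(b^\ell+1)\cdot b^\ell+(b^\ell+1)$, the only difference being that the paper writes out the digits of $b^\ell+1$ explicitly as $w=((b-1)\tothe(\ell-2),b,1)$, whereas you verify that $b^\ell+1$ is an $\ell$-digit bijective numeral by an interval count and then appeal to uniqueness of the representation. Both verifications are sound, so this matches the paper's proof in substance.
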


\begin{proof}
Consider $y =b^\ell + 1$ for $\ell \geq 2$.
Then $y^2 =b^{2\ell} + 2b^\ell + 1$,
which has bijective base-$b$ representation $w\tothe 2$
for $w = ( (b-1) \tothe (\ell-2),  b, 1 )$.
\end{proof}

\def\fibbnd{435000}

For some specific bases $b$ there are other infinite families of
solutions.  For example, the table below summarizes
some of these families, for $n \geq 0$.  They are easy to prove by direct
calculation.  

\begin{table}[H]
\begin{center}
\begin{tabular}{ccc}
$b$ & $\langle y \rangle_b$ & $\langle y^2 \rangle_b$ \\
\hline
2 & $((12)\tothe {3n+3}) 212$ & $ (  ((221112)\tothe n) 221121112)\tothe 2 $ \\
3 & $((1331)\tothe {5n+2}) 22 $ & $ ((12132111223231233322)\tothe n) 1213211131)\tothe 2$ \\
4 & $((21)\tothe {5n+2}) 3$ & $ (( (1123421433)\tothe n) 11241)\tothe 2 $ \\
4 & $((24) \tothe {5n+2}) 4$ & $ (( (2143311234)\tothe n) 21434) \tothe 2 $ \\
5 & $((31) \tothe {3n+1}) 4 $ &$ (((155234)\tothe n) 211)\tothe 2$ \\ 
6 & $((41)\tothe {7n+3} )5$  & $ (((26211162534435)\tothe n) 2621121) \tothe 2$ \\
6 & $((46)\tothe {7n+3}) 6$ & $ (((42236551331456)\tothe n) 4223656) \tothe 2$ \\
7 & $ ( 3 \tothe {2n+2}) 4$ & $ (((15) \tothe {n+1}) 2)\tothe 2$ \\
8 & $ ((52) \tothe {n+1}) 6$ & $ (((34) \tothe {n+1}) 4)\tothe 2$ \\
9 & $  ((35) \tothe {10n+2}) 4$ & $ (((1385674932)\tothe {2n}) 13857)\tothe 2 $  \\
9 & $ ((53) \tothe {10n+2}) 6 $ & $ (((3213856749)\tothe {2n}) 32139)\tothe 2$ \\
9 & $ ((71) \tothe {10n+2}) 8 $ & $ (((5674932138)\tothe {2n}) 56751)\tothe 2$
\end{tabular}
\end{center}
\end{table}

\subsection{Fibonacci representation}

Yet another representation for integers involves the Fibonacci numbers.
The so-called Fibonacci or Zeckendorf representation of an integer $n \geq 0$
consists of writing $n$ as the sum of non-adjacent Fibonacci numbers:
$$ n = \sum_{2 \leq i \leq t} e_i F_i $$
where $e_i \in \{ 0, 1 \}$ and $e_i e_{i+1} \not= 1$ for $i \geq 2$;
see \cite{Lekkerkerker:1952,Zeckendorf:1972}.
In this case we write the representation of $n$, starting with the most
significant digit, as the binary word $(n)_F = e_t e_{t-1} \cdots e_2$.  

\begin{theorem}
There are infinitely many solutions to the equation
$(y^2)_F = w \tothe 2$, for integers $y$ and words $w$.
\end{theorem}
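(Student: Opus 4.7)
The plan is to exhibit an explicit infinite family of pairs $(y,w)$ satisfying $(y^2)_F = ww$. First, using the standard identity $F_{a+b}=F_a F_{b+1}+F_{a-1}F_b$ to expand $V(w\cdot 0^\ell)+V(w)$, I would establish the closed-form expression
\[
V(ww) \;=\; (F_{\ell+1}+1)\,V(w) \;+\; F_\ell\,S(w),
\]
where $V(w)=\sum_{i=1}^\ell w_i F_{\ell+2-i}$ is the Zeckendorf value of $w$, $S(w)=\sum_{i=1}^\ell w_i F_{\ell+1-i}$ is its ``shifted'' value, and $\ell=|w|$. The equation to solve becomes $y^2 = (F_{\ell+1}+1)\,V(w) + F_\ell\,S(w)$, with the combinatorial side condition that $w$ begin with $1$ and end with $0$ so that $ww$ is itself a valid Zeckendorf word (no two adjacent $1$'s, no leading zero).

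Next I would produce seed solutions and try to extend them into an infinite family. The two smallest solutions are $(y_1,w_1)=(4,100)$, arising from $F_7+F_4=16=4^2$, and $(y_2,w_2)=(49,10100100)$, arising from $F_{17}+F_{15}+F_{12}+F_9+F_7+F_4=2401=49^2$. Guided by these examples, the natural strategy is to reinterpret the equation $y^2 = (F_{\ell+1}+1)\,v + F_\ell\,s$, with $v=V(w)$ and $s=S(w)$, as a norm equation in the ring of integers $\mathbb{Z}[\phi]$ of $\mathbb{Q}(\sqrt{5})$, where $\phi=(1+\sqrt{5})/2$ is the fundamental unit, and then lift a base solution to an infinite family by multiplying by a suitable power $\phi^{cr}$. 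The exponent $c$ would be chosen so that $\phi^{cr}\equiv 1$ modulo an ideal of $\mathbb{Z}[\phi]$ large enough to encode the necessary congruence conditions on $v$, $s$, and $\ell$. This is exactly in the spirit of the Pell-type constructions already used in Section~\ref{admiss-section} for the admissible triples $(2,3,1)$, $(3,3,1)$, and $(4,2,2)$.

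The main obstacle will be the Zeckendorf side condition. The pair $(v,s)$ delivered by the unit construction is guaranteed to satisfy the algebraic equation for $y^2$, but there is no automatic reason why the underlying integer $v$ should correspond to a word $w$ that begins with $1$, ends with $0$, and has no two adjacent $1$'s. In the base-$b$ cases of Section~\ref{admiss-section} the analogous digit conditions were enforced by a simple inequality on $c$; here the constraint is genuinely combinatorial and far more delicate, since small changes in the integer $v$ can propagate widely through its Zeckendorf expansion. I expect the verification to proceed either by an explicit induction on $k$ that tracks the Zeckendorf bit pattern of the integers produced by the family and shows it is preserved under multiplication by $\phi^{cr}$, or by a direct congruence argument that ties the required word shape to the class of $y_k$ modulo a suitable modulus inherited from the unit lift.
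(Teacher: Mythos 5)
Your reduction $V(ww) = (F_{\ell+1}+1)V(w) + F_\ell S(w)$ is correct, and your two seeds ($16 = F_7+F_4$, $2401 = F_{17}+F_{15}+F_{12}+F_9+F_7+F_4$) check out, but what you have written is a research plan rather than a proof, and the step you defer is the entire content of the theorem. The norm-equation framework is the wrong tool here for a structural reason: in the base-$b$ cases of Section~\ref{admiss-section}, the quantity $c$ is a free integer parameter constrained only by an inequality $b^{\ell-1}\le c< b^\ell$, so any integral point on the relevant conic yields a solution. In the Fibonacci setting, $S(w)$ is not an independent variable --- it is a rigid, non-algebraic function of $V(w)$ (a digitwise downward shift of the Zeckendorf expansion), so the pair $(v,s)$ cannot range over the solution set of a quadratic form; moreover, for fixed $\ell$ there are only finitely many words $w$, so $\ell$ must grow, and you are really facing a different equation for each $\ell$ rather than infinitely many points on one curve. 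Multiplying a seed by $\phi^{cr}$ with $\phi^{cr}\equiv 1$ modulo an ideal controls residues, but the condition ``the Zeckendorf expansion of $y^2$ is literally periodic with period $\ell = $ half its length'' is not a congruence condition on $y$ and will not be preserved by such a lift; your own closing sentence concedes that you do not know how to carry out the verification.

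The paper avoids all of this by working in the opposite direction: it writes down an explicit one-parameter family of candidate squares, $y_n = F_{4n+3}+F_{4n+6}+F_{8n+8}+F_{8n+11}$, and proves a closed-form identity expressing $y_n^2$ as a sum of Fibonacci numbers whose index set is explicit, has all gaps at least $2$ (so it is the Zeckendorf expansion), and is visibly the union of a block and its translate by $8n+10$ (so the word is $w\tothe 2$). The identity itself is a routine polynomial identity in $\phi^n$ verifiable by a computer algebra system. If you want to salvage your approach, the realistic path is exactly this guess-and-verify one: compute the small solutions $y = 4, 49, 306, 728, 2021,\ldots$, isolate a subfamily whose Zeckendorf index sets are affine in a parameter $n$, conjecture the resulting identity, and prove it by expanding both sides via Binet's formula --- at which point the non-adjacency and the $ww$ shape are read off from the explicit indices rather than extracted from a unit lift.
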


\begin{proof}
The proof depends on the following identity:
\begin{multline*}
(F_{4n+3} + F_{4n+6} + F_{8n+8} + F_{8n+11})^2 = \\
F_{4n+2} + F_{4n+5} + F_{4n+8} + F_{4n+10} + 
\left(\sum_{1 \leq i < n} F_{4n+4i+10} \right) + F_{8n+11} + \\
F_{12n+12} + F_{12n+15} + F_{12n+18} + F_{12n+20} + 
\left( \sum_{1 \leq i < n} F_{12n+4i+20} \right) + F_{16n+21} ,
\end{multline*}
which can be proved with a computer algebra system, such as Maple.

This identity shows that the Fibonacci representation of
$(F_{4n+3} + F_{4n+6} + F_{8n+8} + F_{8n+11})^2$
has the form $w^2$ with
$$ w = (1, 0,0,0,0, ((1, 0,0,0)\tothe (n-1)), 1,0,1,0,0,1,0,0,1, 
(0 \tothe (4n))) .$$
\end{proof}

\begin{remark}
There are other infinite families of solutions.  Here is one:
let $n \geq 1$ and 
suppose $(y)_F= ((100)\tothe (4n+2), 1,0,1,0,0,0)$.
Then $(y^2)_F = ww$ with
$w = ( ((1,0,0,1,(0\tothe 8))\tothe n), 1,0,0,1,0,0,0,0,0,0,1,0) $.
\end{remark}

A list of all the solutions to $(y^2)_F = w \tothe 2$
with $y < 34000000$ is given below.

\begin{table}[H]
\begin{center}
\begin{tabular}{rl}
$y$ & $w$ \\
\hline
4  &  100\\
49  &  10100100\\
306  &  100100000010\\
728  &  10000000101000\\
2021  &  1000100000101010\\
3556  &  10010101001000100\\
3740  &  10100101001000010\\
5236  &  100001010010010000\\
21360  &  100000010100101010010\\
35244  &  1000010000001010000000\\
98210  &  100100000000100100000010\\
243252  &  10000100010100100100000000\\
1096099  &  10010000010100100100010101010\\
1625040  &  100000010101001010000100001000\\
1662860  &  100001000100000010100000000000\\
4976785  &  10100010000100000000000010100100\\
5080514  &  10100100101001000000001000010100\\
11408968  & 1000010001000101001001000000000000\\
31622994  &  100100000000100100000000100100000010\\
31831002  &  100100000101010000010000010101000010\\
33587514  &  101000000100100010001000100101000000\\
33599070  &  101000000100101001000010000101000000\\
\end{tabular}
\end{center}
\end{table}

\begin{remark}
The only solutions to $(y^q)_F = w \tothe n$ other than $(q,n) = (2,2)$
that we found are $(2^4)_F = (100)\tothe 2$ and $(7^4)_F = (10100100)\tothe 2$.
\end{remark}

\appendix
\section{Tables of solutions for admissible triples}

Here we list some of the smaller solutions to Eq.~\eqref{first} corresponding
to admissible triples.

\begin{table}[H]
\begin{center}
\scalebox{0.8}{
\begin{tabular}{ccc}
$b$ & $y$ & $w$ \\
\hline
18& 49& (7)\\
22& 39& (3)\\
22& 78& (12)\\
30& 133& (19)\\
68& 247& (13)\\
68& 494& (52)\\
146& 1651& (127)\\
292& 4503& (237)\\
313& 543& (3)\\
313& 1086& (12)\\
313& 1629& (27)\\
313& 2172& (48)\\
313& 2715& (75)\\
313& 3258& (108)\\
313& 3801& (147)\\
313& 4344& (192)\\
313& 4887& (243)\\
313& 5430& (300)\\
423& 4171& (97)\\
423& 8342& (388)\\
439& 6231& (201)\\
499& 2289& (21)\\
499& 4578& (84)\\
499& 6867& (189)\\
499& 9156& (336)\\
\end{tabular}
}
\end{center}
\label{t231}
\caption{Solutions to $(q,n,\ell) = (2,3,1)$ with $b \leq 500$}
\end{table}

\begin{table}[H]
\begin{center}
\scalebox{0.8}{
\begin{tabular}{cccccc}
$b$ & $y$ & $w$ \\
\hline
68 & 160797 & (17, 53) \\
313 & 7575393 & (19, 32) \\
313 & 15150786 & (76, 128) \\
313 & 22726179 & (171, 288) \\
313 & 30301572 & (305, 199) \\
699 & 274088893 & (450, 133) \\
4366 & 4649437443 & (13, 2735) \\
4366 & 9298874886 & (54, 2208) \\
4366 & 13948312329 & (122, 2785) \\
4366 & 18597749772 & (218, 100) \\
4366 & 23247187215 & (340, 2885) \\
4366 & 27896624658 & (490, 2408) \\
4366 & 32546062101 & (667, 3035) \\
4366 & 37195499544 & (872, 400) \\
4366 & 41844936987 & (1103, 3235) \\
4366 & 46494374430 & (1362, 2808) \\
4366 & 51143811873 & (1648, 3485) \\
4366 & 55793249316 & (1962, 900) \\
4366 & 60442686759 & (2302, 3785) \\
4366 & 65092124202 & (2670, 3408) \\
4366 & 69741561645 & (3065, 4135) \\
4366 & 74390999088 & (3488, 1600) \\
4366 & 79040436531 & (3938, 169) \\
51567 & 134669813878873 & (49737, 9460) \\
234924 & 3266513519259697 & (14911, 203389) \\
234924 & 6533027038519394 & (59647, 108784) \\
234924 & 9799540557779091 & (134206, 186033) \\
686287 & 187720229347705587 & (231469, 166496) \\
3526450 & 27308133273274738941 & (1367399, 1295431) \\
3652434 & 40207131048785611981 & (2487105, 3465907) 
\end{tabular}
}
\end{center}
\caption{Solutions to $(q,n,\ell) = (2,3,2)$ with $b \leq 4 \cdot 10^6$}
\end{table}

\begin{table}[H]
\begin{center}
\scalebox{0.8}{
\begin{tabular}{ccc}
$b$ & $y$ & $w$ \\
\hline
7& 10 &  (2, 6) \\
38& 85 &  (11, 7)\\
41& 58 &  (2, 34)\\
41& 116 &  (22, 26)\\
57& 130 &  (11, 49)\\
68& 185 &  (20, 9)\\
117& 370 &  (31, 73)\\
239& 338 &  (2, 198)\\
239& 676 &  (22, 150)\\
239& 1014 &  (76, 88)\\
239& 1352 &  (181, 5)\\
268& 1105 &  (70, 25)\\
515& 2626 &  (132, 296)\\
682& 915 &  (2, 283)\\
682& 1220 &  (5, 494)\\
682& 1525 &  (11, 123)\\
682& 1830 &  (19, 218)\\
682& 2135 &  (30, 463)\\
682& 2440 &  (45, 542)\\
682& 2745 &  (65, 139)\\
682& 3050 &  (89, 302)\\
682& 3355 &  (119, 33)\\
682& 3660 &  (154, 380)\\
682& 3965 &  (196, 345)\\
682& 4270 &  (245, 294)\\
682& 4575 &  (301, 593)\\
682& 4880 &  (366, 244)\\
682& 5185 &  (439, 295)\\
682& 5490 &  (521, 430)\\
682& 5795 &  (613, 333)\\
882& 5365 &  (225, 55)\\
\end{tabular}
}
\end{center}
\caption{Solutions to $(q,n,\ell) = (3,2,2)$ with $b \leq 1000$}
\end{table}

\begin{table}[H]
\begin{center}
\scalebox{0.8}{
\begin{tabular}{ccc}
$b$ & $y$ & $w$  \\
\hline
18 & 7 &  (1) \\
18 & 14 & (8) \\
88916 & 24661 & (1897) \\
88916 & 49322 & (15176) \\
88916 & 73983 & (51219) \\
1147805 & 631111 & (190801) \\
6042955 & 3956043 & (1695447) \\
\end{tabular}
}
\end{center}
\caption{Solutions to $(q,n,\ell) = (3,3,1)$ with $b \leq 10^7$}
\end{table}

\begin{table}[H]
\begin{center}
\scalebox{0.8}{
\begin{tabular}{ccc}
$b$ & $y$ & $w$ \\
\hline
8 & 57 & (5, 5, 1)\\
19 & 140 & (1, 2, 1)\\
19 & 210 & (3, 14, 1)\\
19 & 280 & (8, 16, 8)\\
19 & 350 & (17, 5, 18)\\
23 & 234 & (1, 22, 18)\\
23 & 312 & (4, 16, 12)\\
23 & 390 & (9, 4, 22)\\
23 & 468 & (15, 21, 6)\\
31 & 532 & (5, 8, 1)\\
80 & 2709 & (6, 5, 29)\\
80 & 5418 & (48, 42, 72)\\
215 & 39438 & (133, 112, 42)\\
293 & 63042 & (116, 7, 101)\\
314 & 19005 & (2, 78, 41)\\
314 & 38010 & (17, 311, 14)\\
314 & 57015 & (60, 225, 165)\\
314 & 76020 & (143, 290, 112)\\
314 & 95025 & (281, 32, 101)\\
362 & 29337 & (4, 22, 117)\\
362 & 58674 & (32, 178, 212)\\
362 & 88011 & (109, 240, 263)\\
362 & 117348 & (259, 342, 248)\\
374 & 99645 & (135, 78, 189)\\
440 & 43617 & (5, 13, 393)\\
440 & 87234 & (40, 111, 64)\\
440 & 130851 & (135, 375, 51)\\
440 & 174468 & (322, 9, 72)\\
485 & 108342 & (47, 188, 433)\\
485 & 216684 & (379, 56, 69)\\
\end{tabular}
}
\end{center}
\caption{Solutions to $(q,n,\ell) = (3,2,3)$ with $b \leq 1000$}
\end{table}

\begin{table}[H]
\begin{center}
\scalebox{0.8}{
\begin{tabular}{ccc}
$b$ & $y$ & $w$ \\
\hline
7 &  20 &  (1)\\
7 &  40 &  (4)\\
41 &  1218 &  (21)\\
99 &  7540 &  (58)\\
239 &  20280 &  (30)\\
239 &  40560 &  (120)\\
1393 &  1373090 &  (697)\\
2943 &  4903600 &  (943)\\
8119 &  23308460 &  (1015)\\
8119 &  46616920 &  (4060)\\
45368 &  316540365 &  (1073)\\
45368 &  633080730 &  (4292)\\
45368 &  949621095 &  (9657)\\
45368 &  1266161460 &  (17168)\\
45368 &  1582701825 &  (26825)\\
45368 &  1899242190 &  (38628)\\
47321 &  527813814 &  (2629)\\
47321 &  1055627628 &  (10516)\\
47321 &  1583441442 &  (23661)\\
47321 &  2111255256 &  (42064)\\
82417 &  4091661910 &  (29905)\\
\end{tabular}
}
\end{center}
\caption{Solutions to $(q,n,\ell) = (2,4,1)$ with $b \leq 10^5$}
\end{table}

\begin{table}[H]
\begin{center}
\scalebox{0.8}{
\begin{tabular}{ccc}
$b$ & $y$ & $w$ \\
\hline
239 &   78 &  (2, 170) \\
239 &    104 &  (8, 136)\\
239 &    130 &  (20, 220)\\
239 &     156 &  (43, 91)\\
239 &    182 &  (80, 88)\\
239 &    208 &  (137, 25)\\
239 &    234 &  (219, 147)\\
682 &    305 &  (27, 191)\\
682 &    610 &  (436, 328)\\
4443 &    2810 &  (710, 3910)\\
12943 &    7930 &  (1823, 10935)\\
275807 &   78010 &  (1765, 45453)\\
275807 &   156020 &  (28242, 175634)\\
275807 &   234030 & (142978, 96202)\\
\end{tabular}
}
\end{center}
\caption{Solutions to $(q,n,\ell) = (4,2,2)$ with $b \leq 10^6$}
\end{table}

\end{document}